\documentclass[11pt,oneside,reqno]{amsart}
\usepackage[utf8]{inputenc}
\pdfoutput=1
\usepackage{parskip}
\usepackage{mathrsfs}
\usepackage[margin=1in]{geometry}
\usepackage{enumerate}
\usepackage{amsthm}
\usepackage{amsmath}
\usepackage{amsfonts}
\usepackage{amssymb}
\usepackage{graphicx}
\usepackage{color}
\usepackage{graphics}
\usepackage{eepic}
\usepackage{nicefrac}
\usepackage{bigints}
\usepackage{bbm}
\usepackage{array}
\usepackage{caption}

\makeatletter
\@addtoreset{equation}{chapter}
\makeatother
\usepackage{tikzsymbols}
\usepackage{booktabs}
\usepackage{hyperref}
\hypersetup{
 colorlinks   = true,
 urlcolor     = blue,
 linkcolor    = blue,
 citecolor   = red,
 bookmarksopen=true
}

\newcommand{\ignore}[1]{}

\usepackage[colorinlistoftodos]{todonotes}

\newcommand{\be}{\begin{equation}}
\newcommand{\ee}{\end{equation}}

\newcommand{\C}{{\mathbb{C}}}

\newcommand{\N}{{\mathbb{N}}}

\newcommand{\D}{{\mathbb{D}}}

\usepackage{mathtools} 

\newcommand{\diam}{\mbox{diam}}

\newtheorem{thm}{Theorem}[section]

\newtheorem{prop}[thm]{Proposition}

\newtheorem{Lemma}[thm]{Lemma}

\newtheorem*{claim*}{Claim}

\newtheorem{remark}[thm]{Remark}

\theoremstyle{definition}
\newtheorem{defn}[thm]{Definition}
\newtheorem{example}[thm]{Example}

\theoremstyle{remark}

 \renewcommand\epsilon{\varepsilon}

\usepackage{palatino}
 \usepackage{color, xcolor, mathrsfs, graphicx, hyperref, framed}

\newcommand{\mb}{\mathbb}

\usepackage[italian,english]{babel}
\usepackage[autostyle]{csquotes}  
\author{Subhajit Ghosh}
\address{Department of Mathematics, Bar-Ilan University, Ramat Gan, 5290002, Israel
 }
 \email{ghoshsu1@biu.ac.il}

\author{Koushik Ramachandran}
\address{Tata Institute of Fundamental Research, Centre for Applicable Mathematics, Bangalore, India-560065}
\email{koushik@tifrbng.res.in}

\title{A note on the Erd\H{o}s minimal area problem}

\begin{document}
\begin{abstract}
We answer a question of Erd\H{o}s, Herzog, and Piranian \cite[Problem $4$, page $135$]{EHP} on the minimal area of polynomial lemniscates when all the zeros of the polynomial are constrained to lie on a compact set $K$ whose logarithmic capacity is greater than $1$. We show that the minimal area decays exponentially fast and prove that the Fekete polynomials are asymptotic minimizers for the area problem.
\end{abstract}
\maketitle

\section{Introduction}
Let $p$ be a complex polynomial. The unit lemniscate of $p$ is defined to be the set $\Lambda_p = \{z\in\C: |p(z)| < 1\}$. We note that $\Lambda_p$ is a bounded open set and hence its area is non-zero. Let $K\subset\C$ be a compact set.  For $n\in\N,$ let $\mathscr{P}_n(K)$ denote the space of \emph{monic} polynomials of degree $n$, all of whose zeros lie in the compact $K,$ i.e.,
\[\mathscr{P}_n(K) = \bigg\{p(z): p(z) = \prod_{j=1}^{n}(z - z_j), \hspace{0.05in}\mbox{where}\hspace{0.05in} z_j\in K \hspace{0.05in}\mbox{for}\hspace{0.05in} 1\leq j\leq n\bigg\}.\]
Let $m$ denote Lebesgue measure in the complex plane. Following in the vein of \cite{EHP} we consider the minimal lemniscate area for the constrained problem and define
\begin{align}\label{def A}
    A_n(K) := \inf_{p\in \mathscr{P}_n(K)} m(\Lambda_p), \qquad \vartheta(K):= \inf_{n} A_n(K).
\end{align}

In \cite[Problem $4$, page $135$]{EHP} Erd\H{o}s, Herzog, and Piranian posed questions regarding the relationship between $\vartheta(K)$ and the logarithmic capacity (equivalently, the transfinite diameter), which we will denote by $\operatorname{cap}(K)$. Throughout the paper, we shall use tools from potential theory in the complex plane. For the benefit of the reader, we briefly digress to recall the definition of logarithmic potential of a measure and the capacity of a compact set. Given a positive measure $\mu$ with compact support in $K$, its logarithmic potential is defined by $U_{\mu}(z) = \int_{K}\log|z-w| d\mu(w), \hspace{0.05in} z\in\C$. Of relevance to us is the fact that if $Z$ is a finite multiset of cardinality $n$, and $\mu = \sum_{z\in Z}\frac{1}{n}\delta_{z}$, then $U_{\mu}(z)$ is simply $\frac{1}{n}\log|p(z)|$, where $p$ is the monic polynomial of degree $n$ with zero set $Z$. The capacity of $K$ is defined by

\begin{align}\label{log cap}
    \log \operatorname{cap}(K) := \sup_{\mu\in\mathcal{P}(K)}\int_{K}U_{\mu}(z)d\mu(z),
\end{align}
where $\mathcal{P}(K)$ denotes the collection of all Borel probability measures on $K.$
One can show that for reasonable compact sets, there is a unique measure (called the \emph{equilibrium measure}) $\nu_K\in\mathcal{P}(K),$ which realizes the supremum in the above definition. For more on capacities and their applications to analysis, we refer the reader to \cite{Ransford}. 
 
We are now ready to go back to the question posed by Erd\H{o}s, Herzog, and Piranian. They asked

\begin{enumerate}[(I)]
    \item Is $\vartheta(K)$ uniquely determined by the logarithmic capacity $\operatorname{cap}(K)$?
    \item Is it true that $\vartheta(K) = 0$ whenever $\operatorname{cap}(K) \ge 1$?
\end{enumerate}

This problem is also listed as Problem $1040$ in the recently curated Erd\H{o}s problems  website \cite{ErdosProblemsBloom}. In this paper, we answer both questions (the second one partially). First we prove that (I) does not hold in general. In a recent preprint \cite{Aletheia}, with the help of an AI tool, it was shown that there exist two countably infinite compact sets (hence having capacity zero), $K_1$ and $K_2$,  with $\vartheta(K_1)\neq \vartheta(K_2)$. Here, we show that given $t\in (0, 1)$ it is quite easy to construct compacts $K_1, K_2$ with
\[
\operatorname{cap}(K_1) = \operatorname{cap}(K_2)=t, \quad \text{yet} \quad \vartheta(K_1) \neq \vartheta(K_2),
\]
These examples illustrate that not only the capacity but other geometric/metric features of $K$ also play a role in determining $\vartheta(K)$.

\vspace{0.1in}

\noindent It was proved in \cite[Corollary $1.6$]{KLR1} that (II) holds provided $\operatorname{cap}(K) > 1$ and, in addition there exist constants $C, \eta > 0$ such that the equilibrium measure $\nu$ of $K$ satisfies an estimate of the form $\nu(B(z, r))\leq C r^{\eta}$, for all balls $B(z, r)$. Our main result, Theorem \ref{main} below, shows that (II) is true when $\operatorname{cap}(K)>1$ without any other restrictions on the regularity of $K$. We in fact prove that in this case, the well known Fekete polynomials of degree $n$ have lemniscate areas going to zero exponentially fast in $n$ as $n$ tends to infinity, thereby obtaining an upper bound on $A_n(K)$. We follow this result by proving a lower bound on $A_n(K)$ that asymptotically matches the upper bound. Combining the two results gives 

\begin{equation}\label{universallaw}
\lim_{n\to\infty}\frac{\log A_n(K)}{n}= -2\log \operatorname{cap}(K).    
\end{equation}

We stress here two important points regarding the relation \eqref{universallaw}: the first is that this result is true for \emph{all} compacts with capacity bigger than one, no matter how non-smooth they are. Secondly, the limit depends only on the capacity of the set, and no other features of $K$. This is a good place to mention that as a consequence of the proof of \eqref{universallaw}, it will follow that the sequence of Fekete polynomials are asymptotic area minimizers, i.e., the lemniscate areas of the Fekete polynomials satisfy the same limiting relation as in \eqref{universallaw} .

\vspace{0.1in}

It is worthwhile to remark here that when $\operatorname{cap}(K) = 1$, the Fekete polynomials do not necessarily have small areas for their lemniscates. To see this, just consider $K = \overline{\D}$, where $p_n^* (z) = z^n -1$ are the unique (upto rotation) Fekete polynomials. Now it is known that for all $n\in \N$, the corresponding lemniscates $\Lambda_{{p_n^*}}$ have area bounded away from $0$. In their paper \cite{EHP}, Erd\H{o}s, Herzog and Piranian proved that $\vartheta(\overline{\D}) = 0$. However even for $K = \overline{\D}$, there are no explicit examples of a sequence $p_n\in\mathscr{P}_n(K)$ with $m(\Lambda_{p_n})\to 0$. This already hints that the case when $K$ has capacity $1$ is a bit more delicate to handle. However, see the recent work \cite[Theorem $6$]{KLR2} where Krishnapur, Lundberg, and Ramachandran have shown using an approximation argument that when $K$ is the closure of a bounded $C^2$ smooth domain with $\operatorname{cap}(K) = 1$, one has $\vartheta(K) = 0$. The result for general capacity one sets remains open. 

\vspace{0.1in}

\noindent In the last section of our article we show that $A_n([-2, 2])$ goes to zero like $\frac{1}{n}$ and compare it with the known decay of $A_n(\overline{\D})$ which is like $\frac{1}{\log n}$ to illustrate that for compacts $K$ with $\operatorname{cap}(K) = 1,$ the decay rate of $A_n(K)$ is not universal. We note that this is quite unlike the scenario for sets with $\operatorname{cap}(K) > 1$, where the relation \eqref{universallaw} proves universality of the decay rate.

\section{\texorpdfstring{Capacity does not determine $\vartheta(K)$}{Capacity does not determine mu(K)}}
In \cite{ErdosNetanyahu}, Erd\H{o}s and Netanyahu proved that if the compact set $K$ is \emph{connected} and $\operatorname{cap}(K)\in (0, 1)$, then $\Lambda_{p}$ contains a disk of positive radius $r$, where $r$ depends only on $\operatorname{cap}(K)$. In particular, let us take $K = \overline{B(0,t)}$. It is well known that $\operatorname{cap}(K) = t$. It follows from \cite{ErdosNetanyahu} that
\[
\vartheta(K)\ge \operatorname{Area}(B(0,r(t))) = \pi r(t)^2.
\] 
We now give an example of a disconnected compact set $K$ with capacity $t\in(0, 1)$ for which $\vartheta(K)$ can be made smaller than any prescribed positive number.

\begin{example}\label{ex1}
Let $t\in (0, 1)$ and $j\geq 2$ be fixed. Let $\varepsilon = \varepsilon(t, j) > 0$ be a constant to be chosen later. Consider the symmetric intervals $B_1=[j, j+\varepsilon]$ and $ B_2=[-j -\varepsilon,-j]$. Set $B:=B_1\cup B_2.$
By \cite[Corollary 5.2.6]{Ransford},
\[
\operatorname{cap}(B)=\frac12\sqrt{(2j+\varepsilon)\varepsilon}.
\]
Now we choose $\varepsilon >0$ so that $\operatorname{cap}(B)=t$. Next, consider the sequence of polynomials
\begin{align}\label{poly}
    p_n(z)=
\begin{cases}
(z-j)^l(z+j)^l, & \text{if } n=2l,\\[1mm]
(z-j)^{l+1}(z+j)^l, & \text{if } n=2l+1.
\end{cases}
\end{align}
We claim that 
\begin{align}\label{smallarea}
\Lambda_{p_n}\subset B\!\left(-j,\frac{1}{\sqrt{j}}\right)\cup B\!\left(j,\frac{1}{\sqrt{j}}\right).
\
\end{align}
\begin{figure}
    \centering
    \includegraphics[width=\textwidth]{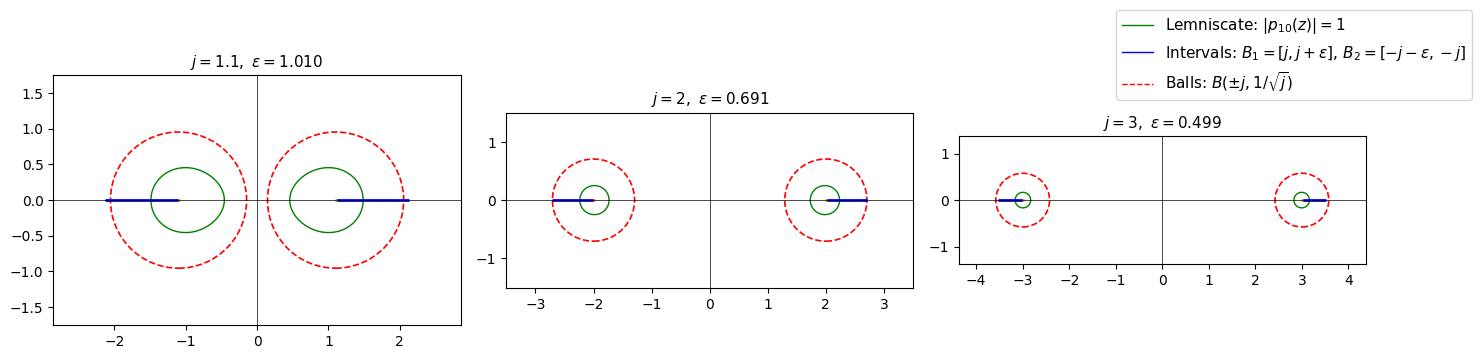}
     \caption{The compact set shown in the figure is $[j,j+\varepsilon]\, \cup \, [-j-\varepsilon,-j],$ where $j$ and $\varepsilon$ are chosen so that its logarithmic capacity remains fixed at $0.9$. The polynomial is as defined in \eqref{poly}. The figures illustrate that, as $j$ increases, the lemniscate shrinks toward the two zeros of the polynomial, and the total area enclosed by the level set decreases.}
    \label{fig:counterexample}
\end{figure}
\noindent Assume for a moment that \eqref{smallarea} holds. Then for all $n$ we will have
$A_n(K)\le \frac{2\pi}{j}$, which in turn implies that $\vartheta(K)\le \frac{2\pi}{j}.$ Since $j$ can be taken arbitrarily large, this will yield our example. See Figure \ref{fig:counterexample}. For the proof of the claim, suppose $z \notin  B\!\left(-j,\frac{1}{\sqrt{j}}\right)\cup B\!\left(j,\frac{1}{\sqrt{j}}\right)$. Then $w=\min\{|z-j|,|z+j|\}\geq \frac{1}{\sqrt{j}}$. Now we handle two cases. If $w>\sqrt{j}$, \[ |p_n(z)|> \frac{1}{\sqrt{j}}|z-j|^l|z+j|^l > \frac{1}{\sqrt{j}}j^l>1.\] 
Hence $z\notin \Lambda_{p_n}$. On the other hand if $w\leq\sqrt{j}$, say $|z-j|\leq\sqrt{j}$, then by the triangle inequality $|z+j|\geq 2j - w\geq j.$ Therefore
 \[ |p_n(z)|> \frac{1}{\sqrt{j}}|z-j|^l|z+j|^l > \frac{1}{\sqrt{j}} |2j-w|^lw^l> \frac{ j^l}{(\sqrt{j})^{l+1}}> 1.\] 
 Hence once again we have $z\notin\Lambda_{p_n}$ and the proof of \eqref{smallarea} is complete.

\end{example}

\section{Main theorem}
\begin{thm}\label{main}
Let $t > 1$ be fixed. Let $K\subset\C$ be a compact set with $\operatorname{cap}(K)= t$. Then,
\[\limsup_{n\to\infty}\frac{\log(A_n(K))}{n}\leq - 2\log t.\]
In particular, $\vartheta(K) = 0$.
\end{thm}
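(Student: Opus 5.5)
The plan is to take $p_n$ to be a Fekete polynomial of degree $n$ for $K$, i.e. $p_n(z)=\prod_{k=1}^n (z-z_k)$, where $(z_1,\dots,z_n)\in K^n$ maximizes $\prod_{j<k}|z_j-z_k|$. We then bound $m(\Lambda_{p_n})$ directly by roughly $n^3 t^{-2n}$. Since $\operatorname{cap}(K)=t>0$, the Fekete points are distinct, so partial fractions give
\[
\frac{1}{p_n(z)}=\sum_{k=1}^n \frac{1}{p_n'(z_k)\,(z-z_k)} .
\]

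\textbf{Step 1 (the key estimate).} We show that $|p_n'(z_k)|\ge t^{\,n-1}$ for every $k$.

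Fix $k$ and replace $z_k$ by an arbitrary $w\in K$. By maximality of the Fekete configuration,
\[
\prod_{j\ne k}|w-z_j|\le \prod_{j\ne k}|z_k-z_j| = |p_n'(z_k)| .
\]
Take logarithms and integrate in $w$ against the equilibrium measure $\nu_K$, a probability measure on $K$. This gives
\[
\log|p_n'(z_k)|\ \ge\ \sum_{j\neq k} U_{\nu_K}(z_j).
\]
Now invoke Frostman's theorem: with the sign convention $U_\mu(z)=\int\log|z-w|\,d\mu(w)$, one has $U_{\nu_K}\ge \log\operatorname{cap}(K)$ everywhere on $\C$. Hence $\log|p_n'(z_k)|\ge (n-1)\log t$.

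This is the step needing the most care. One must check that Frostman's inequality holds everywhere with the right sign, for arbitrary compact $K$ of positive capacity, so that no regularity of $K$ enters. That is exactly why the argument needs no Hölder-type assumption on $\nu_K$, unlike \cite{KLR1}.

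\textbf{Step 2 (localizing the lemniscate).} Suppose $z\in\Lambda_{p_n}$. Then $|1/p_n(z)|>1$, so by the partial fraction formula and Step 1,
\[
\sum_{k=1}^n \frac{1}{|z-z_k|} > t^{\,n-1}.
\]
Consequently some term satisfies $|z-z_k|^{-1}>t^{n-1}/n$, that is,
\[
\Lambda_{p_n}\subset\bigcup_{k=1}^n B\bigl(z_k,\, n\,t^{-(n-1)}\bigr).
\]

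\textbf{Step 3 (conclusion).} Since $p_n\in\mathscr P_n(K)$,
\[
A_n(K)\le m(\Lambda_{p_n})\le \pi n\cdot n^2 t^{-2(n-1)} = \pi n^3 t^{-2(n-1)} .
\]
Taking $\frac1n\log$ and letting $n\to\infty$ yields
\[
\limsup_{n\to\infty}\frac{\log A_n(K)}{n}\le -2\log t .
\]
Because $t>1$, this forces $A_n(K)\to 0$, hence $\vartheta(K)=0$. The argument also shows that the Fekete polynomials achieve this upper rate, which is the form needed later to compare with the matching lower bound.
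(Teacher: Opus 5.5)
Your proof is correct. Its first half is the same as the paper's: the paper also takes Fekete polynomials and proves $|f_n'(z_{k,n})|\ge t^{n-1}$ from the maximality of the Fekete configuration. It closes that step by quoting $\sup_K|q|\ge\operatorname{cap}(K)^{\deg q}$ for monic $q$ (Ransford, Thm.~5.5.4). Your integration against $\nu_K$ followed by Frostman's inequality $U_{\nu_K}\ge\log\operatorname{cap}(K)$ on all of $\C$ is exactly the standard proof of that bound, so this is the same step written out, and you are right that it uses no regularity of $K$.

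The routes genuinely diverge when localizing the lemniscate. The paper takes a connected component $G$ of $\Lambda_{f_n}$ and applies Pommerenke's Bernstein-type inequality for continua. Combined with $\operatorname{cap}(\overline G)\ge\operatorname{diam}(G)/4$, this gives $\operatorname{diam}(G)\le An^2t^{-(n-1)}$, and the paper then sums over at most $n$ components.

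Your partial-fraction identity replaces all of this with a one-line covering $\Lambda_{p_n}\subset\bigcup_k B(z_k,nt^{-(n-1)})$ and the explicit bound $A_n(K)\le\pi n^3t^{-2(n-1)}$. This is more elementary and self-contained, since it needs no connectedness, no Bernstein inequality and no capacity--diameter comparison. It also adapts directly to Remark~\ref{remark1}: at level $l_n$ the radius becomes $nl_nt^{-(n-1)}$.

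What the paper's version buys is a per-component diameter bound, which it reuses in Proposition~\ref{nonminimizer}. Your covering yields the same information there once combined with the Kovari--Pommerenke $n^{-2}$ separation of Fekete points, because the balls are then pairwise disjoint for large $n$, so each component lies in a single ball. Conversely, your identity needs simple zeros and a derivative lower bound at every zero, whereas the component argument needs only one zero of large derivative per component. Both conditions are automatic for Fekete polynomials.
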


\begin{proof}
The proof is adapted from the argument in \cite[Theorem $2.2$ (b)]{GhoshRamachandran}. We begin by recalling that a Fekete $n$-tuple of $K$ is any $n$-tuple $(z_1, z_2, ..., z_n)$ which realizes the supremum

\[\sup_{j, k\leq n : j < k}\left\{\prod |w_j - w_k|^{\frac{2}{n(n-1)}}: w_1, w_2, ... w_n\in K\right\}.\]

We refer to \cite[Section $5.5$]{Ransford} for more on Fekete points and their connection to potential theory. For each $n \in \mb{N}$, let $\{z_{k,n}\}_{k=1}^{n}$ denote a Fekete $n$-tuple for the compact set $K$. Let $f_n$ denote the corresponding Fekete polynomial of degree $n$ defined by
    \begin{align*}
        f_n(z) = \prod_{k=1}^n (z-z_{k,n}), \quad \quad n \in \mb{N}.
    \end{align*}
    We prove that $\{|f_n| < 1\}$ has small area. As a first step, we show that for $1\leq j\leq n,$ the size of the derivative $\big|f_n'(z_{j,n})\big|$ is exponentially large using properties of the Fekete points. Let us consider a zero, say $z_{1, n}$. Define
    \begin{align*}
        q_1(z) := \prod_{j=2}^{n} \frac{(z-z_{j,n})}{(z_{1,n}-z_{j,n})}.
    \end{align*}
    From the definition of Fekete points we infer that for each $z\in K$, the product of the pairwise distances between the $n$ points $\{z, z_{2,n}, z_{3, n}, \ldots, z_{n, n}\}$ is not larger than the corresponding product for the Fekete $n$-tuple. This implies that 
    \[\sup_{z\in K}\prod_{j=2}^{n}|z - z_{j, n}|\leq \prod_{j= 2}^{n} \left| z_{1,n}-z_{j,n}\right|.\] Now the denominator in the expression for $q_1$ is simply $f_n'(z_{1,n})$. Using this along with the previous estimate, we obtain
    \begin{align}\label{fekete derivative estimate}
        |f'_n(z_{1,n})| =\prod_{j= 2}^{n} \left| z_{1,n}-z_{j,n}\right| \geq\sup_{z\in K}\prod_{j=2}^{n}|z - z_{j, n}| \geq \operatorname{cap}(K)^{n-1}\geq t^{n-1}, 
    \end{align}
    where the second inequality in \eqref{fekete derivative estimate} follows from the fact that the sup norm of a monic polynomial of degree $d$ on a compact $K$ is at least $\operatorname{cap}(K)^{d}$, see \cite[Theorem $5.5.4$]{Ransford}. This establishes the largeness of the derivative at $z_{1,n}$.

\vspace{0.1in}

\noindent  Next, denote by $G_1$ the connected component of $\Lambda_{f_n}$ that contains $z_{1, n}$. Let $d_1$ be the diameter of $G_1$. Then using $\operatorname{cap}(G_1)\geq \frac{d_1}{4}$ (see \cite[Theorem $5.3.2$]{Ransford}), along with Bernstein's inequality \cite{PommDeriv}, we have that for some absolute constant $A>0$,
\begin{align}\label{smalldiam}
t^{n-1}\leq  |f_n'(z_{1, n})|\leq \frac{An^2||f_n||_{\overline{G_1}}}{d_1} = \frac{An^2}{d_1}.
\end{align}
We note in passing that the Bernstein inequality relating the size of the derivative to the sup norm of the polynomial holds for any connected compact set, irrespective of the regularity of the set.  The estimate \eqref{smalldiam} shows that $d_1\leq\exp\left[-n\log t (1 + o(1))\right]$. Since $G_1\subset B(z_{1,n}, d_1)$, it follows that $m(G_1)\leq \exp\left[-2n\log t (1 + o(1))\right] $. A similar area estimate obviously holds for every component of $\Lambda_{f_n}$. Since $\Lambda_{f_n}$ can have at most $n$ connected components, the preceding estimates along with the subadditivity of area gives $m(\Lambda_{f_n})\leq \exp\left[-2n\log t (1 + o(1))\right]$. This in turn implies that the same upper bound holds for $A_n(K)$ as it is the minimal area. Taking logarithms, dividing by $n$ and then taking limsup of the resulting sequence as $n\to\infty$ concludes the proof.
\end{proof}
\begin{remark}\label{remark1}
    The proof shows that the result above continues to hold if instead of minimizing the area of the level one lemniscate $\{|p| < 1\}$, we consider the corresponding minimum area (for $p\in\mathscr{P}_n(K)$) of lemniscates of the form $\{|p| < l_n\}$, where $l_n$ can grow as fast as $\exp(n^{\gamma})$ for $\gamma < 1$.
\end{remark}


We will next prove that the bound for $A_n(K)$ from Theorem \ref{main} is asymptotically sharp. As mentioned earlier, as a consequence we can show Fekete polynomials are \emph{asymptotic} minimizers for the area problem.  See Proposition \ref{nonminimizer} below for more on this.



\begin{thm}\label{asymptotic min}
    Suppose the assumptions of Theorem \ref{main} hold. Let $\{f_n\}_{n \in \N}$ be a sequence of Fekete polynomials for the compact set $K$. Then
    \begin{align} \label{log cap limit}
        \lim_{n \to \infty} \frac{1}{n}\log A_n(K) = \lim_{n \to \infty} \frac{1}{n}\log m(\Lambda_{f_n})= -2\log t.
    \end{align}
\end{thm}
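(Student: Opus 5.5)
The upper bounds come from Theorem \ref{main}: its proof gives $\limsup \frac1n \log m(\Lambda_{f_n}) \le -2\log t$, and $A_n(K)\le m(\Lambda_{f_n})$. It therefore suffices to prove the lower bound
\[
\liminf_{n\to\infty}\frac1n\log A_n(K)\ge -2\log t .
\]
This bound will also cover $m(\Lambda_{f_n})$, since $m(\Lambda_{f_n})\ge A_n(K)$. So everything reduces to a lower bound for $m(\Lambda_p)$ that holds uniformly over $p\in\mathscr{P}_n(K)$.

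For the lower bound, fix $p\in\mathscr{P}_n(K)$ and compare $\Lambda_p$ with a large disk via the isoperimetric property of capacity. Put $R=\max_{z\in K}|z|$; all zeros lie in $\overline{B(0,R)}$. The basic fact is that $E_p:=\overline{\Lambda_p}=\{|p|\le1\}$ has $\operatorname{cap}(E_p)=1$. Indeed, $\frac1n\log|p|$ is the Green function of the complement of $E_p$ with pole at infinity, up to the constant $-\log\operatorname{cap}$, and $p$ is monic. Capacity alone does not bound area from below, since a segment has positive capacity and zero area, so I will not use it directly. Instead I use a direct counting argument on small disks around the zeros.

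Let $r>0$ and $z\in B(z_j,r)$ for a zero $z_j$. Then
\[
|p(z)|\le \prod_{k=1}^n |z-z_k|\le (2R+r)^{n-1}\, r .
\]
Hence $B(z_j,r)\subset\Lambda_p$ as soon as $r(2R+r)^{n-1}<1$, which gives $m(\Lambda_p)\ge \pi r^2$ with $r\approx (2R+1)^{-(n-1)}$. This yields only $-2\log(2R+1)$ in place of $-2\log t$, so this step is the main obstacle. I need to replace $2R+1$ by $t=\operatorname{cap}(K)$, and the crude triangle inequality has to be upgraded by choosing the center cleverly.

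My plan for the sharp version is to choose the zero $z_j$ that minimizes $\prod_{k\ne j}|z_j-z_k|=|p'(z_j)|$. Since $p$ has all its zeros in $K$,
\[
\prod_j |p'(z_j)| = |\mathrm{disc}(p)| = \prod_{j\ne k}|z_j-z_k| \le \delta_n(K)^{n(n-1)},
\]
where $\delta_n(K)$ is the $n$-th diameter, which tends to $t$. Hence some $j$ satisfies $|p'(z_j)|\le \delta_n(K)^{n-1}=t^{n(1+o(1))}$. Repeated zeros are harmless: then $p'(z_j)=0$, and the bound only improves. Near this zero,
\[
|p(z)|\le \prod_{k\ne j}\bigl(|z_j-z_k|+r\bigr)|z-z_j| .
\]
The product over $k\ne j$ is at most $|p'(z_j)|\prod_{k\ne j}\bigl(1+r/|z_j-z_k|\bigr)$. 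The difficulty is that this can be large when some $z_k$ are very close to $z_j$. I would handle it by instead working with $\sup_{|z-z_j|\le r}|p(z)|$ and using the Cauchy estimate or Taylor expansion: $|p(z)|\le |p'(z_j)|\,r+\sum_{m\ge2}\frac{|p^{(m)}(z_j)|}{m!}r^m$. For the higher coefficients I would use the elementary symmetric function bound, which expresses each coefficient as a sum over subsets of products of distances. If this is too lossy, the fallback is to average $\log|p|$ over a small circle around the minimizing zero and apply Jensen's formula, so as to locate a radius $r=t^{-n(1+o(1))}$ on which $|p|<1$ on a disk.

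Taking $r=e^{-n\log t(1+o(1))}$ then gives $m(\Lambda_p)\ge\pi r^2=e^{-2n\log t(1+o(1))}$ uniformly in $p$, which is the required lower bound.
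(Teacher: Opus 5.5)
Your reduction is fine: Theorem \ref{main} gives the upper bound for $m(\Lambda_{f_n})$, hence for $A_n(K)$, and a lower bound on $A_n(K)$ transfers to the Fekete areas. The lower bound itself, however, has a genuine gap.

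\textbf{The gap.} The estimate $|p'(z_j)|=\prod_{k\ne j}|z_j-z_k|\le\delta_n(K)^{n-1}$ controls $p(z)/(z-z_j)$ only at the single point $z=z_j$. To get $B(z_j,r)\subset\Lambda_p$ you need $\sup_{|z-z_j|<r}\prod_{k\ne j}|z-z_k|<1/r$. This supremum dominates the circle mean over $|z-z_j|=r$, so it is at least $\prod_{k\ne j}\max(r,|z_j-z_k|)$. That exceeds $|p'(z_j)|$ by the unbounded factor $\prod_{k:\,|z_j-z_k|<r}r/|z_j-z_k|$. Moreover, minimizing $|p'(z_j)|$ tends to select exactly the zeros that have very close neighbours.

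Your sentence on repeated zeros is where this breaks. Take $K=\overline{B(0,t)}\cup\{a\}$ (capacity $t$) with $|a|>t^2+t+1$. Let $p$ have a double zero at $a$ and $n-2$ simple zeros in $\overline{B(0,t)}$. Then $a$ minimizes $|p'|$, since $p'(a)=0$. Yet on $|z-a|=r$ with $r=t^{-n(1+o(1))}$ one has $|p(z)|\ge r^2(|a|-t-1)^{n-2}\to\infty$, so $B(a,r)\not\subset\Lambda_p$.

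\textbf{The fallbacks do not repair this.} In the Taylor expansion about $z_j$, the coefficient of $(z-z_j)^m$ is $e_{n-m}(\{z_j-z_k\}_{k\ne j})$; for example $e_{n-2}=e_{n-1}\sum_{k\ne j}(z_j-z_k)^{-1}$. Bounding termwise just returns $r\prod_{k\ne j}(|z_j-z_k|+r)$, which is the quantity you cannot control. Jensen's formula gives only the circle mean $\log r+\sum_{k\ne j}\log\max(r,|z_j-z_k|)$. That mean is not controlled by the discriminant, and, being an average, it does not force $|p|<1$ on a disk.

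\textbf{What is missing and how the paper supplies it.} You need an upper bound for $\frac1n\log|p(z)/(z-z_j)|$ on the whole disk, not just at its centre. The paper gets this by restricting to area minimizers. Proposition \ref{equidistribution}, which itself uses Theorem \ref{main}, shows their zero distributions converge to $\nu_K$. One then picks zeros $z_{1,n}$ tending to a regular point $z_0\in\operatorname{supp}\nu_K$. The principle of descent then gives $\frac{1}{n-1}\log|P_n(z)/(z-z_{1,n})|\le\log t+o(1)$ for $|z-z_{1,n}|\le e^{-n\log t}$.

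\textbf{How your uniform idea could be rescued.} You would have to change both the selection rule and the estimate. Choose $j$ to minimize the \emph{smoothed} product $\prod_{k\ne j}(|z_j-z_k|+r)$. Replace the $n$-th diameter by $M(s):=\sup_{\sigma\in\mathcal P(K)}\iint\log(|z-w|+s)\,d\sigma(z)\,d\sigma(w)$.
\begin{itemize}
\item Since this kernel increases with $s$, one has $\frac1{n^2}\sum_{j\ne k}\log(|z_j-z_k|+r)\le M(s)+\frac{|\log s|}{n}$ for all $r\le s<1$ and all configurations in $K$.
\item Since the kernel is continuous for fixed $s>0$, weak$^*$ compactness of $\mathcal P(K)$ together with monotone convergence gives $M(s)\downarrow\log t$.
\item With $r_n=e^{-n(\log t+\epsilon)}$, for large $n$ and every $p\in\mathscr{P}_n(K)$ there is a zero with $\prod_{k\ne j}(|z_j-z_k|+r_n)\le e^{n(\log t+\epsilon/2)}$.
\item Hence $|p|\le e^{-n\epsilon/2}<1$ on $B(z_j,r_n)$.
\end{itemize}
This would prove the lower bound while bypassing equidistribution, regular points and descent. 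As written, though, your proposal does not establish the lower bound.
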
 

The proof of Theorem \ref{asymptotic min} relies on an important fact concerning the limiting distribution of zeros of any sequence of area minimizers (for similar results in the setting of capacity $1$ sets, see \cite[Theorem~$7$]{KLR2}). In what follows below, we write $\mu_n \xrightarrow[]{w^\ast} \mu$ to denote weak$^*$ convergence of compactly supported measures.

\begin{prop}\label{equidistribution}
Let $t > 1$ be fixed. Let $K \subset \mathbb{C}$ be a compact set with $\operatorname{cap}(K)= t.$ For each $n \in \N$, let $q_n$ be an area minimizing monic polynomial of degree $n$, i.e.,
\[
    m(\Lambda_{q_n}) = A_n(K).
\]

Let $\mu_n := \frac{1}{n}\underset{{z\,:\,q_n(z)=0}}{\sum} \delta_z$
be the empirical measure of the zeros of $q_n$. Then
\begin{align}\label{convergence to equilibrium measure}
    \mu_n \xrightarrow[]{w^\ast} \nu_K,
\end{align}
where $\nu_K$ denotes the equilibrium measure of $K$.
\end{prop}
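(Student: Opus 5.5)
The plan is to identify every weak$^*$ limit point of $(\mu_n)$ as the equilibrium measure. Since all zeros of $q_n$ lie in $K$, the $\mu_n$ are probability measures on the fixed compact set $K$, so the sequence is tight. It therefore suffices to show that every weak$^*$ limit point $\mu\in\mathcal{P}(K)$ of $(\mu_n)$ satisfies
\[
I(\mu):=\iint \log|z-w|\,d\mu(z)\,d\mu(w)\;\ge\;\log t=\log\operatorname{cap}(K).
\]
By \eqref{log cap}, $I(\mu)\le\log\operatorname{cap}(K)$ always holds. Since $\operatorname{cap}(K)>0$, the equilibrium measure is the unique maximizer \cite{Ransford}, so the inequality forces $\mu=\nu_K$.

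The input from minimality is the upper bound of Theorem \ref{main}: $A_n(K)=m(\Lambda_{q_n})\le \exp[-2n\log t(1+o(1))]$. Set $\rho_n:=2\sqrt{A_n(K)/\pi}$. The disk $B(\zeta,\rho_n)$ has area $4A_n(K)>m(\Lambda_{q_n})$, so it cannot be contained in $\Lambda_{q_n}$. Hence for each zero $\zeta_j$ of $q_n$ (listed with multiplicity) there is a point $w_j$ with $|w_j-\zeta_j|<\rho_n$ and $|q_n(w_j)|\ge 1$, that is,
\[
0\le \frac1n\sum_{k=1}^n\log|w_j-\zeta_k|\le \frac1n\log\rho_n+\frac1n\sum_{k\ne j}\log\bigl(|\zeta_j-\zeta_k|+\rho_n\bigr).
\]
The key observation is that the diagonal term $k=j$ is exponentially negative: $-\frac1n\log\rho_n\ge \log t-o(1)$ by the bound on $A_n(K)$. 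Averaging over $j$ gives
\[
\frac{1}{n^2}\sum_{j\ne k}\log\bigl(|\zeta_j-\zeta_k|+\rho_n\bigr)\;\ge\;\frac{1}{n}\Bigl(-\frac1n\log\rho_n\Bigr)\cdot n\cdot\frac1n \;=\;\log t-o(1).
\]
Here repeated zeros cause no trouble, since they simply contribute $\log\rho_n$ to the left-hand side.

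To pass to the limit, fix $\varepsilon>0$. Since $\rho_n\to0$, for $n$ large we have $\log(|x-y|+\rho_n)\le\log(|x-y|+\varepsilon)$. The kernel $\log(|x-y|+\varepsilon)$ is continuous and bounded on $K\times K$, and the diagonal contributes only $O(\frac1n\log(\operatorname{diam}K+\varepsilon))=o(1)$. Therefore
\[
\iint\log(|x-y|+\varepsilon)\,d\mu_n\,d\mu_n\ge\log t-o(1).
\]
Since $\mu_n\otimes\mu_n\to\mu\otimes\mu$ weak$^*$ along the subsequence, we get $\iint\log(|x-y|+\varepsilon)\,d\mu\,d\mu\ge\log t$. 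Letting $\varepsilon\downarrow0$ and applying monotone convergence (the kernels decrease, and are bounded above on $K\times K$) yields $I(\mu)\ge\log t$, which completes the argument.

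The main obstacle I anticipate is exactly the point that a naive argument only gives $U_{\mu}\ge 0$, hence $I(\mu)\ge 0$, which is useless. The exponential smallness of $A_n(K)$ from Theorem \ref{main} must be converted into the extra $\log t$, and the step above does this through the self-interaction term $\frac1n\log\rho_n$. The care needed there is to check the uniform control: that $-\frac1n\log\rho_n\ge\log t-o(1)$ uses only the upper bound on $A_n(K)$, and that no lower bound on the area is needed.
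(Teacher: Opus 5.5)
Your argument is correct, and it takes a genuinely different route from the paper.

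\textbf{The paper's route.} The paper argues by contradiction. If a limit point $\mu\neq\nu_K$, uniqueness of the equilibrium measure gives a point $w_0\in\operatorname{supp}\mu$ and a disk $\overline{B}_r$ on which $U_\mu<\log t-\epsilon_0$. The upper-envelope lemma (Lemma~\ref{Main lemma from JMAA}) then gives $\sup_{\overline{B}_r}|q_n|<\lambda^n$ with $\lambda<t$. Next it applies Bernstein's inequality on the component of $\{|q_n|<\lambda^n\}$ containing $B_r$. Combined with the area formula $\int_{\Omega_1}|q_n'|^2\,dA\ge\pi$, this yields $m(\Lambda_{q_n})\gtrsim r^2n^{-4}\lambda^{-2n}$, which contradicts Theorem~\ref{main}. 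So that proof is local and reuses the Bernstein-type machinery behind Theorem~\ref{main}.

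\textbf{Your route.} You work globally with the logarithmic energy. Your only analytic input is the elementary fact that a disk of area $4A_n(K)$ about each zero must leave $\Lambda_{q_n}$. This turns the exponential smallness of the area into a lower bound on a regularized discrete energy of the zeros. The continuous kernels $\log(|x-y|+\varepsilon)$ and monotone convergence then give $I(\mu)\ge\log t$. This avoids Bernstein's inequality, the area formula and Lemma~\ref{Main lemma from JMAA} entirely.

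\textbf{What your route buys.} It is also quantitative. For any $P_n\in\mathscr{P}_n(K)$ with $\frac1n\log m(\Lambda_{P_n})\to a<0$ along a subsequence, your inequality shows that every limit point $\mu$ of the zero distributions satisfies $I(\mu)\ge -a/2$. Since $I(\mu)\le\log t$ always, this gives $a\ge -2\log t$ at once. That is the lower bound in Theorem~\ref{asymptotic min}, obtained without the regular-point argument of Steps 1--2 there.

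\textbf{Two cosmetic slips.} First, the middle expression in your averaging display has a spurious factor $\frac1n$. The average over $j$ of the right-hand sides is exactly $-\frac1n\log\rho_n$, which is what you need. Second, the diagonal contribution is exactly $\frac1n\log\varepsilon$, not $O\bigl(\frac1n\log(\operatorname{diam}K+\varepsilon)\bigr)$. It is still $o(1)$ for fixed $\varepsilon$, so the argument is unaffected.
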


The proof of  Proposition \ref{equidistribution} in turn relies on the following lemma.
\begin{Lemma}\cite[Lemma~3.5]{GhoshRamachandran}\label{Main lemma from JMAA}
    Let $\mu_n \in \mathcal{P}(K)$ be a sequence such that $\mu_n \overset{\omega^{*}}{\longrightarrow } \mu $. Then for any compact set $L$ we have,
    \begin{align*}
       \underset{n \to \infty}{\overline{\lim}}  \sup_{L} U_{\mu_n} \leq \sup_{L} U_{\mu}.
    \end{align*}
   
\end{Lemma}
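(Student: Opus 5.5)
The plan is to replace the logarithmic kernel by a bounded continuous truncation. For the truncated kernel, weak$^*$ convergence gives uniform convergence on $L$. Afterwards we remove the truncation with a Dini-type argument. For $M>0$ set
\[
\ell_M(z,w) := \max\{\log|z-w|, -M\}, \qquad U^M_{\nu}(z) := \int_K \ell_M(z,w)\,d\nu(w).
\]
Since $\log|z-w|\le \ell_M(z,w)$, we have $U_{\mu_n}\le U^M_{\mu_n}$ pointwise for every $n$ and $M$. It therefore suffices to control $\sup_L U^M_{\mu_n}$ first as $n\to\infty$ and then as $M\to\infty$.

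\emph{Step 1 (fixed $M$, $n\to\infty$).} The kernel $\ell_M$ is continuous, hence uniformly continuous and bounded, on the compact set $L\times K$. Consequently the family $\{U^M_{\mu_n}\}_n$ is uniformly bounded and equicontinuous on $L$, because all the $\mu_n$ are probability measures on $K$. For each fixed $z\in L$, the function $w\mapsto \ell_M(z,w)$ is continuous on $K$, so weak$^*$ convergence gives $U^M_{\mu_n}(z)\to U^M_{\mu}(z)$. Equicontinuity together with pointwise convergence on a compact set upgrades this to uniform convergence on $L$ (Arzel\`a--Ascoli, or a direct finite $\delta$-net argument). Hence
\[
\limsup_{n\to\infty}\sup_L U_{\mu_n}\le \lim_{n\to\infty}\sup_L U^M_{\mu_n} = \sup_L U^M_{\mu} =: s_M .
\]

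\emph{Step 2 ($M\to\infty$).} This is the step needing care, since $U_\mu$ is only upper semicontinuous and may equal $-\infty$. As $M\uparrow\infty$, $\ell_M\downarrow \log|z-w|$, so by monotone convergence $U^M_\mu\downarrow U_\mu$ pointwise; moreover each $U^M_\mu$ is continuous on $L$. Thus $s_M$ is nonincreasing, and we must show $\lim_M s_M\le \sup_L U_\mu$.

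Pick $z_M\in L$ with $U^M_\mu(z_M)=s_M$, and pass to a subsequence with $z_M\to z_0\in L$. Fix $M'$. For $M\ge M'$, monotonicity in $M$ gives $s_M = U^M_\mu(z_M)\le U^{M'}_\mu(z_M)$, and by continuity of $U^{M'}_\mu$ the right side tends to $U^{M'}_\mu(z_0)$. Hence $\lim_M s_M\le U^{M'}_\mu(z_0)$ for every $M'$. Letting $M'\to\infty$ gives $\lim_M s_M\le U_\mu(z_0)\le \sup_L U_\mu$. Combined with Step 1, this proves the lemma.
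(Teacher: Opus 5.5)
Your proof is correct. Note that the paper gives no proof of this lemma; it is imported from \cite[Lemma~3.5]{GhoshRamachandran}. The route suggested by the paper's own toolkit is the principle of descent \cite[Theorem~6.8]{SaffTotikLogpotential}, which the authors invoke in Step~2 of the proof of Theorem~\ref{asymptotic min}. Each $U_{\mu_n}$ is upper semicontinuous, so it attains its supremum on $L$ at some $w_n$. Choose a subsequence that realizes the $\limsup$ and along which $w_{n_k}\to w_0\in L$; descent then gives $\limsup_k U_{\mu_{n_k}}(w_{n_k})\le U_\mu(w_0)\le\sup_L U_\mu$.

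Your argument is essentially a self-contained version of this, since the standard proof of descent is exactly your truncation $\ell_M$ followed by monotone convergence. The difference is the order of the limits. You first get uniform convergence $U^M_{\mu_n}\to U^M_\mu$ on all of $L$ for each fixed $M$ (equicontinuity plus pointwise weak$^*$ convergence). You then remove the truncation with a Dini-type compactness argument in $M$. The descent route instead uses compactness of $L$ once, on the maximizing points, and removes the truncation pointwise along a single convergent sequence.

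The descent route is two lines once the theorem is cited, and it is the moving-point form the paper actually needs later ($w_k\to z_0$ in Theorem~\ref{asymptotic min}). Your route avoids any external result. It also yields the slightly stronger intermediate statement $\sup_L U_{\mu_n}\le \sup_L U^M_\mu+o(1)$ for every fixed $M$, together with $\inf_M\sup_L U^M_\mu=\sup_L U_\mu$.

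The points that need a moment's thought all go through as you state them: continuity of $\ell_M$ across the diagonal $z=w$, monotone convergence for a decreasing family bounded above, and the possibility $U_\mu(z_0)=-\infty$.
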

\begin{proof}[Proof of Proposition \ref{equidistribution}]
    Assume that \eqref{convergence to equilibrium measure} does not hold. Then there exists a  subsequence $\{n_k\}$ (which henceforth will be assumed to be $\{n\}$ for notational simplicity) such that $\mu_{n} \overset{\omega^{*}}{\longrightarrow } \mu $ for some $\mu \neq \nu_K \in \mathcal{P}(K)$ (c.f. \cite[Theorem A.$4.2$]{Ransford}). Now from the definition \eqref{log cap} and the uniqueness of equilibrium measure, we have
    \begin{align*}
    \int_{K}U_{\mu}(w)d\mu(w)< \log t.
    \end{align*}
    This implies that there exists a point \(w_0 \in \operatorname{supp}(\mu)\) such that $U_{\mu}(w_0) < \log t.$
By the upper semicontinuity of \(U_{\mu}\), it follows that \(U_{\mu}<\log t\) in a neighborhood of \(w_0\). Hence, we can find a closed disk \(\overline{B}_r\) of radius $r$ centered at \(w_0\) such that
\[
U_{\mu}< \log t-\epsilon_0 \quad \text{on } \overline{B}_r,
\]
for some $\epsilon_0>0$ small enough. Applying Lemma~\ref{Main lemma from JMAA} with $L=\overline{B}_r$, we get 
\begin{align*}
       \underset{n \to \infty}{\overline{\lim}}  \sup_{\overline{B}_r} U_{\mu_{n}} \leq \sup_{\overline{B}_r} U_{\mu}< \log t-\epsilon_0 .
    \end{align*}
Since $U_{\mu_n}(z)= \frac{1}{n}\log |q_n(z)|$, the above estimate implies that we can find a $N_0 \in \N$ such that for all $n\geq N_0$
\begin{align*}
\sup_{\overline{B}_r}|q_{n}|< \exp\left({n(\log t-\frac{\epsilon_0}{2})}\right).
\end{align*}
Set $\lambda:=\exp(\log t-\frac{\epsilon_0}{2})$. Note that $\lambda < t$.  Let $\Omega$ be the connected component of $\{|q_n|<\lambda^n\}$ containing $B_r$. Next, we estimate the derivative of $q_n$ in  $\Omega$ using Bernstein's inequality \cite{PommDeriv} to obtain
\begin{align}\label{derivative estimate}
\sup_{\Omega}|q'_n| \leq C\frac{n^2}{\diam(\Omega)}\sup_{\Omega}|q_n|<C\frac{n^2}{r} \lambda^n
\end{align}
Let $\Omega_1$ be any connected component of $\Lambda_{q_n}$ that is contained in $\Omega$. Since $\Omega_1$ contains at least one zero of $q_n$, the area formula gives
\begin{align*}
    \int_{\Omega_1} |q'_n(z)|^2 dA(z) \geq \pi.
\end{align*}
We now plug the derivative bound from \eqref{derivative estimate} to estimate the integrand and obtain $$C^2n^4\frac{\lambda^{2n}}{r^2}m(\Omega_1)\geq \pi.$$ In other words, $m(\Lambda_{q_n}) \geq m(\Omega_1) \geq \frac{Cr^2}{n^4 \lambda^{2n}}.$
Since $\lambda < t$, this contradicts the upper bound obtained for $A_n(K)$ in Theorem~\ref{main}. Hence the desired assertion holds.
\end{proof}

We now go on to prove the Theorem.

\begin{proof}[Proof of Theorem \ref{asymptotic min}]
    The upper bound of the limit in \eqref{log cap limit} follows from Theorem~\ref{main}. So it will be enough to prove that $\liminf_{n\to\infty}\frac{\log A_n(K)}{n}\geq - 2\log t$. In fact we will prove a slightly more general result that we state as a Claim.
    
    \textbf{Claim:} Let $\{P_n\}$ be a sequence of polynomials in $\mathscr{P}_n(K)$ such that the the empirical measure of the zeros of $P_n$ converge in the weak$^*$ sense to the equilibrium measure $\nu_K$. Then
    \begin{align}\label{lower limit}
        \liminf_{n \to \infty} \frac{1}{n}\log m(\Lambda_{P_n}) \geq -2\log t
    \end{align}

    Of course, by Proposition \ref{equidistribution}, area minimizers have the property stated in the Claim and this will give Theorem \ref{asymptotic min}.

\noindent To convey the main idea to the reader, we will first sketch the proof when $K=\overline{\mathcal{D}}$, where $\mathcal{D}$ is a bounded domain with $C^2$ boundary. Let $P_n(z) = (z-z_{1,n})p_n(z)$. Then of course we have the empirical measure $\mu_{p_n}\xrightarrow[]{w^\ast}\nu_K$. Since $\mathcal{D}$ is assumed smooth, Frostman's Theorem \cite[Theorem $3.3.4$]{Ransford} guarantees that $U_{\nu_K}(z) = \log t$ for all $z\in K$. Applying Lemma \ref{Main lemma from JMAA} with $L = K$, we obtain $\limsup_{n\to\infty}\sup_{K}U_{\mu_{p_n}}\leq\sup_{K} U_{\nu_K} = \log t$. This means that for all large $n$, we have $||p_n||_{\overline{\mathcal{D}}}\leq (t + \epsilon_n)^n$, for some $\epsilon_n = o(1)$. Now we may assume that $z_{1,n}\in\partial\mathcal{D}$ (clearly that is the worst case). Then for $z\in B\left(z_{1,n},\frac{1}{2(t+\epsilon_n)^n}\right)\cap\mathcal{D}$, we have $|P_n(z)| = |z- z_{1,n}||p_n(z)|\leq \frac{1}{2(t+\epsilon_n)^n}(t+\epsilon_n)^n\leq \frac{1}{2}$. Since $\mathcal{D}$ is smooth, this means some fixed proportion $\alpha > 0$ of the ball $B\left(z_{1,n},\frac{1}{2(t+\epsilon_n)^n}\right)$ will lie in $\Lambda_{P_n}$ and this gives the required lower bound on the area. It may happen that if $K$ has bad regularity, then $\sup_{K}U_{\mu} >\log t$ and there is no way to control how much larger it will be. What follows below is to circumvent this situation.

\textbf{Proof of the claim:} We divide the proof into two steps. First, we show that, for all sufficiently large $n$, the polynomial $P_n$ has a zero converging to a \emph{regular point} of $K$ (see definition below). Second, we prove that the connected component containing this zero has sufficiently large area, thereby completing the proof. 

\textbf{{Step~1}:}  Let the polynomial $P_n$ have zeros $\{z_{n,1},\cdots ,z_{n,n}\}$, $n\in\N$. Then by the hypothesis of the claim
    \begin{align*}
        \sigma_n := \frac{1}{n} \sum_{j=1}^n\delta_{z_{j,n}} \xrightarrow[]{w^\ast}\nu_K.
    \end{align*}

By Frostman's Theorem \cite[Theorem $3.3.4$]{Ransford}, we have $U_{\nu_K}(z) = \log t$ for every $z\in K$, except possibly for a negligible Borel set of capacity $0$ (where $U_{\nu_K} > \log t)$. The bad set consists of points that are \emph{irregular} and its complement is the \emph{regular} points of $K$, i.e., \[ B:=\left\{z\in K: U_{\nu_K}(z)>\log t\right\} \qquad \& \qquad R:= K \setminus B.\]
We will show that there exist a regular point \(z_0\in\operatorname{supp}(\mu_K) \cap R\), and indices \(j_n\in\{1,\dots,n\}\) defined for all sufficiently large \(n\), such that \begin{align}\label{the point z_0}
z_{j_n,n}\longrightarrow z_0. 
\end{align}
We know that $B$ is of capacity $0$, and $\operatorname{cap}(R)=\operatorname{cap}(K)$. Furthermore, from \cite[Theorem 3.2.3]{Ransford} one has $\nu_K(B)=0$ and $\nu_K(R)=1$. Since $\operatorname{supp}(\nu_K) \cap R \neq \phi,$ we can choose a point $z_0 \in \operatorname{supp}(\nu_K) \cap R$ such that 
$\nu_K\left(B\left(z_0,1/m\right)\right)>0$ for all $m \in \N$.
    By the Portmanteau theorem \cite[Theorem $2.1$]{Bilingsleyconvergenceofprobabilitymeasures}, for each fixed $m$
    \[ \liminf_{n\to\infty}\sigma_n\left(B\left(z_0,\frac{1}{m}\right)\right) \geq \nu_K\left(B\left(z_0,\frac{1}{m}\right)\right)>0. \] 
    That is, for each $m$ there exists a $N_m$ ( w.l.o.g we assume $N_m$ to be increasing in $m$) such that 
    \begin{align}
        \sigma_n\left(B\left(z_0,\frac{1}{m}\right)\right) >0, \qquad \mbox{for}\quad  n \geq N_m.
    \end{align}
    Now for $n\in [N_m,N_{m+1}],$ choose the zero $z_{j_n,n}$ that is inside the ball  $B\left(z_0,1/m\right)$. As $m \to \infty$ it is immediate that \eqref{the point z_0} holds. Henceforth we assume that the zeros have been labelled so that $j_n=1$.
    
    \textbf{{Step}~2:} We start by defining the polynomial $p_n$ of degree $n-1$ from $P_n$ as follows. 
    \begin{align*}
        p_n(z)= \frac{P_n(z)}{z-z_{1,n}}=\prod_{j=2}^n (z-z_{n,j}).
    \end{align*}Let $\mu_n:= \frac{1}{n-1} \sum_{z: p_n(z)=0} \delta_z$ be the empirical measure of the polynomials $p_n$. It is obvious that $\mu_n\xrightarrow[]{w^\ast} \nu_K.$ 
Set $\delta_n:= \exp\left(-n(\log t)\right).$ With this $\delta_n$ we claim that there exists a $\eta_n=o(1)$, such that
\begin{align}\label{poly bound in larger set}
       \,\,  \sup_{\overline{B(z_{1,n}, \delta_n)}} \, U_{\mu_n} \leq \log t + \eta_n.
    \end{align}
Suppose \eqref{poly bound in larger set} does not hold. Then there exists  $\eta_0>0$, and a subsequence $n_k$ such that 
\begin{align}\label{for contradiction}
    U_{\mu_{n_k}}(w_k) > \log t + \eta_0,
\end{align}
where the points $w_k \in \overline{B(z_{1,n_k}, \delta_{n_k})}$. First, we show that the sequence $w_k $ converges to the point $z_0$, which was chosen in \eqref{the point z_0} of  Step~1. Indeed, by construction,  we have $z_{1,n} \to z_0$. Then an application of the triangle inequality gives the claim:
\begin{align*}
    |w_k - z_0| \leq  |w_k- z_{1,n_k}|+| z_{1,n_k}-z_0|\leq \delta_{n_k}+ | z_{1,n_k}-z_0|.
\end{align*}
Next, we employ the principle of descent (c.f. \cite[Theorem-$6.8$]{SaffTotikLogpotential}) which states that for probability measures $\sigma_n$, $n = 1, 2, ...$ having support in a fixed compact set and converging to some measure $\sigma$ in the $\mbox{weak}^{*}$ topology, we have 
    \begin{align*}
        \underset{n \to \infty }{\overline{\lim}} U_{\sigma_{n}}(y_{n}) \leq  U_{\sigma}(y_0),
    \end{align*}
    whenever $y_n \to y_0$ in the complex plane. Applying this result in the estimate \eqref{for contradiction} with $y_n = w_n$, $y_0 = z_0$, $\sigma_n=\mu_n$, and $\sigma = \nu_K$, we obtain $U_{\nu_K}(z_0)\geq \log t + \eta_0$. This is the desired contradiction since $z_0$ is a regular point. Therefore, \eqref{poly bound in larger set} holds and for all large $n \in \N$ we obtain
     \begin{align*}
     \sup_{\overline{B(z_{1,n}, \delta_n)} }\, |p_n| \ \leq \exp\left(n(\log t+\eta_n)\right).
    \end{align*}
    Now, we have all the ingredients to show that 
    \begin{align*}
B(z_{n,1},\Tilde{\delta}_n) \subset \Lambda_{P_n}, \qquad \mbox{for} \qquad \Tilde{\delta}_n:= \frac{1}{2} \exp\left(-n(\log t+\eta_n)\right) \leq \delta_n.
    \end{align*}
    Indeed, for $z \in B(z_{n,1},\Tilde{\delta}_n)$
    \begin{align}
        |P_n(z)|= |p_n(z)||z-z_{n,1}| \leq   \exp\left(n(\log t+\eta_n)\right)\, \Tilde{\delta}_n\,=\frac{1}{2}<1.
    \end{align}
    Therefore, $m(\Lambda_{P_n}) \geq \frac{\exp\left(-2n(\log t+\eta_n)\right)}{4}$. Now taking $\log$, dividing by $n$ and taking the liminf as $n \to \infty$ we obtain \eqref{lower limit}. This completes the proof of the claim and hence of the Theorem.
\end{proof}

We next tackle the question of whether Fekete polynomials are true area minimizers, not just asymptotic ones. Perhaps surprisingly, we can show the following
\begin{prop}\label{nonminimizer}
    Let $t>1.$ Then there exists a compact set $K \subset \C$ with $ \operatorname{cap}(K)=t$ for which Fekete polynomials \emph{are not} the area minimizing polynomials. 
\end{prop}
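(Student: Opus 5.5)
We will exhibit an explicit compact set $K$ with $\operatorname{cap}(K)=t$ for which, at some fixed degree $n$, the Fekete polynomial has lemniscate area strictly larger than $A_n(K)$. The simplest choice is $n=2$ (or $n=1$). For $n=1$ there is no room, since every monic linear polynomial gives a unit disk of area $\pi$. So we work with $n=2$, where the Fekete pair maximizes $|z_1-z_2|$: it is a diametral pair of $K$. For a quadratic $p(z)=(z-a)(z-b)$ with $|a-b|=d$, the lemniscate $\{|p|<1\}$ is a Cassini oval whose area depends only on $d$. Write $\Phi(d)$ for this area. The first step is to record that $\Phi$ is a strictly decreasing function of $d$ for $d>2$. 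This can be obtained from the explicit Cassini oval formula, or more softly from the change of variables $w=p(z)$, which gives $m(\Lambda_p)=\int_{|w|<1}\frac{dA(w)}{|w+d^2/4|}$ after substituting $z=(a+b)/2+\zeta$ with $\zeta^2=w+d^2/4$. Indeed $\Lambda_p$ is the preimage of the unit disk under the $2$-to-$1$ map, so $m(\Lambda_p)=2\int_{|w|<1}\frac{1}{4|\zeta|^2}\,dA(w)$. For $d>2$ the integrand decreases pointwise in $d$, so $\Phi$ is strictly decreasing.

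With this monotonicity in hand, minimizing area over $\mathscr{P}_2(K)$ amounts to maximizing $|z_1-z_2|$, which is exactly what the Fekete pair does. So degree $2$ cannot produce a counterexample, and we must pass to $n=3$. For $n=3$ the area of a cubic lemniscate is not a function of the Vandermonde product alone. The plan is to build $K$ as a finite union of small pieces, say three tiny disks or segments, placed so that two competing configurations appear. In the first configuration (the Fekete one), the three points nearly form an equilateral triangle with a larger product of distances, but the lemniscate components are fattened. In the second, the configuration has a smaller product but the lemniscate has smaller area. Concretely, I would take $K=\{0\}\cup\{R\}\cup E$, where $E$ is a small set near a third location, for instance a segment $[-R-\varepsilon,-R]$ or a small disk. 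The set $E$ is tuned so that $\operatorname{cap}(K)=t$; capacity is continuous in these parameters, which allows exact tuning. Then we compare the Fekete triple with an alternative triple such as $(0,R,R)$ or $(-R,R,R)$, or a double zero at a single far point.

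The cleaner route, which I would try first, uses asymptotics in the separation $R$. When the zeros sit at mutually far points $a_k$ with multiplicities $m_k$, each lemniscate component near $a_k$ is approximately the disk $|z-a_k|^{m_k}<\prod_{l\ne k}|a_k-a_l|^{-m_l}$. Its area is therefore about $\pi\prod_{l\neq k}|a_k-a_l|^{-2m_l/m_k}$. Placing a double zero at a far point can beat spreading the zeros out, because the exponent $2/m_k$ rewards multiplicity. For three points $0,R,-R$ the sum of component areas is about $\pi(\frac{1}{R^4}+\frac{2}{4R^4})$. For zeros $R,R,-R$ the area is about $\pi(\frac{1}{2R}+\frac{1}{4R^2})$ to leading order; this is worse, so the comparison must be set up the other way. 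Instead I would choose $K$ as a collinear three-cluster set with unequal gaps, where the Fekete triple is forced to be the extreme spread $(-R,0,S)$. The idea is to choose the gaps $R\neq S$ so that a different triple, still with all points in $K$, yields smaller $\sum_k\pi\prod_{l\ne k}|a_k-a_l|^{-2}$ despite a smaller Vandermonde product.

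The main obstacle is finding a concrete geometric configuration where the Fekete maximizer (the maximal product of pairwise distances) and the area minimizer (the minimal sum of reciprocal squared products) genuinely disagree. The asymptotic area formula then has to be made rigorous with error terms small enough to preserve a strict inequality. I would search among clusters consisting of a few far-apart tiny disks. Inside a disk cluster, the Fekete points spread to the boundary, so a cluster of radius $\rho$ holding $m$ points makes them repel one another. The area functional instead prefers clustering the points into a multiple zero, which creates one component. This tension looks like the right source of disagreement. The final step is to adjust the radii so that $\operatorname{cap}(K)=t$ exactly, by continuity and monotonicity of capacity under dilation of a piece.
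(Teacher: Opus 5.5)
\textbf{Verdict: there is a genuine gap --- the proposal never produces a counterexample.} You never fix a set $K$, a degree $n$, and a competitor $g\in\mathscr{P}_n(K)$ for which $m(\Lambda_g)<m(\Lambda_{f_n})$ is verified. Your last paragraph leaves exactly this step (``finding a concrete geometric configuration'') open. The concrete candidates you do list also fail.
\begin{itemize}
\item[(i)] $K=\{0\}\cup\{R\}\cup E$ with $E$ a short segment or small disk cannot have capacity $t>1$. Finite sets are polar, so $\operatorname{cap}(K)=\operatorname{cap}(E)$ independently of $R$, and a short segment or small disk has capacity far below $1$. There is nothing to tune.
\item[(ii)] Take three far-apart small clusters and $n=3$. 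A double zero at one cluster creates a component of area about $\pi/d$, where $d$ is the separation. One zero per cluster gives total area about $\pi\,(d_{12}^2+d_{13}^2+d_{23}^2)/(d_{12}d_{13}d_{23})^2=O(d^{-4})$. As your own computation shows, multiplicity loses badly at $n=3$.
\item[(iii)] If the clusters are subsets of a common line, both the Vandermonde $d_{12}d_{13}d_{23}$ and this leading-order area functional increase when the two outer zeros are pushed outward. For the middle zero, both are maximized by maximizing $d_{12}d_{23}$ with $d_{13}$ fixed. So at leading order they select the same triple, and the ``unequal gaps'' plan does not separate them.
\end{itemize}
A fixed-degree example may exist, but it would need an explicit configuration and quantitative error bounds, and neither is supplied.

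\textbf{The missing idea: the multiplicity effect only pays off for large $n$.} A double zero at a far point $a$ multiplies $|p'|$ at \emph{every} other zero $z_j$ by roughly $|z_j-a|/|z_j-c|$, which shrinks all those components. The enlarged component at $a$ has area about $\pi\prod_j|a-z_j|^{-1}$ over the remaining zeros. This is negligible only when there are many zeros far from $a$.

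The paper's construction is as follows.
\begin{itemize}
\item It takes $t=2$ and $K=2\overline{\D}\cup\{a\}$ with $a>400$; the extra point does not affect capacity.
\item The Fekete polynomial $f_n$ has a \emph{simple} zero at $a$, because Fekete points are distinct. It is compared with $g_n=f_n\,(z-a)/(z-c_n)$, where $c_n$ is a zero near $2$.
\item All components are isolated: Kovari--Pommerenke give separation $\gtrsim n^{-2}$, versus the exponentially small diameters from the proof of Theorem~\ref{main}.
\item Since $|g_n|>|f_n|$ near $2\overline{\D}$, those components shrink and $\Omega_{c_n}(f)$ disappears.
\item At $b_n$, Koebe's $1/4$ theorem for $\Omega_{b_n}(f)$ and Lemma~\ref{mainlemma} for $\Omega_{b_n}(g)$, where $|g_n'(b_n)|\approx\tfrac a4|f_n'(b_n)|$, give an area loss $\gtrsim 4^{-2n}a^{-2}$.
\item Meanwhile $\Omega_a(g)\subset B(a,a^{-n/4})$ gains at most $\pi a^{-n/2}$. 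This is smaller once $\sqrt a>16$ and $n$ is large.
\end{itemize}
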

\begin{proof}
     For ease of exposition, we assume $t=2$. Let \(K= 2\overline{\D} \cup \{a\}\), where $a > 400$. Recall that $\operatorname{cap}(2\overline{\D}) = 2$.  Adding a point does not change capacity, so we have $ \operatorname{cap}(K)=2$.  Since Fekete points maximize pairwise product of distances, it is obvious that for all $n \in \N$, the Fekete polynomials $f_n$ have a simple zero at $a$. It is well known that all points of $2\overline{\D}$ are regular in the sense of Step $1$ of the proof of Theorem~\ref{asymptotic min}. Therefore for large enough $n$ we can find zeros of $f_n$ near the points $-2$ and $2$. We rename those zeros as $b_n$ and $c_n$ respectively. The other zeros will be denoted by $z_{j,n}$ for $j=\{3, 4\cdots, n\}$. Let us define a new polynomial $g_n$ as follows
    \begin{align*}
        g_n(z) = f_n(z) \, \frac{(z-a)}{(z-c_n)} =   (z-a) (z-b_n) \prod_{j=3}^n (z-z_{j,n}).
    \end{align*}
     We note that to obtain $g_n$ from $f_n$, the zero at $c_n$ has been removed and in its place, an additional zero is placed at the point $a$.  In particular, $g_n$ has all zeros inside $K$ and a zero of multiplicity $2$ at the point $a$. Since Fekete polynomials have distinct zeros, this will imply that $g_n$ is not a Fekete polynomial for $K$. We now claim that  
    \begin{align}\label{area inequality0}
        m(\Lambda_{f_n})> m\left(\Lambda_{g_n}\right),
    \end{align}
which will prove that Fekete polynomials are not true area minimizers for such a compact. Towards proving \eqref{area inequality0}, we recall a result of Kovari and Pommerenke (c.f. note added in the proof, page 75 \cite{Kovaripommerenke}), that the minimum distance between \emph{any two} Fekete points is at least of the order $n^{-2}$. On the other hand, the proof of Theorem \ref{main} shows that the diameter of each component of $\Lambda_{f_n}$ is exponentially small. This means that every component of $\Lambda_{f_n}$ contains exactly one zero (we henceforth call such components \emph{isolated}). Similarly, all components of $\Lambda_{g_n}$ except the component containing $a$ are \emph{isolated} components. 

\vspace{0.1in}

We will now analyze the effect of modifying $f_n$ to $g_n$ on the size of their respective components. Let $\Omega_w(P)$ be the connected component of $\Lambda_P$  containing $w$. By the definition of $g_n$, we have  $\frac{g_n(z)}{f_n(z)} = \frac{z-a}{z-c_n}$. By our choice of $a$, this implies that $|g_n| > |f_n|$ on every component of $\Lambda_{f_n}$, except $\Omega_a(f)$ (abusing notation here for simplicity). From here it follows that every component of $\Lambda_{g_n}$ (except $\Omega_{a}(g)$) is contained within the corresponding component of $\Lambda_{f_n}$. In contrast, since $g_n$ has a double zero at $a$, we can see that $\Omega_{a}(f)\subset\Omega_a(g)$. Indeed, a simple triangle inequality shows $B\left(a, \frac{1}{a^{3n/4}}\right)\subset\Omega_{a}(g)$. At the same time, the estimate $|f_n'(a)| \geq (a-2)^n$ is easy to see, and followed by an application of Lemma \ref{mainlemma} below (say with $r=2$) gives $\Omega_{a}(f)\subset B\left(a, \frac{4}{(a-2)^{n}}\right)\subset B\left(a, \frac{1}{a^{3n/4}}\right)$ by our choice of $a$. The situation is now clear: by modifying $f_n$ to $g_n$, the area of $\Omega_{b_n}(f)$ and the areas of all $\Omega_{z_{j,n}}(f)$ have decreased, while the area of $\Omega_a(f)$ has increased. Finally, the component $\Omega_{c_n}(f)$ has been deleted altogether. In other words, we have
    \begin{align*}
        m\left(\Lambda_{{f}_n}\setminus \big(\Omega_a(f)  \cup\Omega_{b_n}(f)\big)\right)\geq m\left(\Lambda_{g_n}\setminus \Big(\Omega_a(g) \cup \Omega_{b_n}(g)\Big)\right).
    \end{align*}
    Therefore by the preceding arguments, to show \eqref{area inequality0} it suffices to prove
    \begin{align}\label{area inequality 2}
       & m\left(\  \Omega_{b_n}(f)\setminus \Omega_{b_n}(g)\right) >  m\left(\Omega_a(g)\setminus  \Omega_a(f) \right).
    \end{align}
    Since the size of an \emph{isolated} component is controlled by the derivative at the unique zero in the component, we first bound the derivatives of $f_n$ and $g_n$ at $b_n$ in the following way.
    \begin{align*}
        |f_n'(b_n)|= 2\,|a+2| \, \prod_{j=3}^n \, |b_n-z_{j,n}| \, \leq \, 4^{n} a \qquad \& \qquad  |g_n'(b_n)| = (a+2) \Theta (|f_n'(b_n)|).
    \end{align*}
    By Lemma~\ref{mainlemma} ( we can use this lemma with $r = 2$, because of Remark~\ref{remark1}), along with  Koebe's $\frac{1}{4}$ theorem applied to $\Omega_{b_n}(g)$ we have for some absolute $c> 0$,
    \begin{align}\label{lower bound for area}
        m\left(\Omega_{b_n}(f) \setminus \Omega_{b_n}(g)\right) \geq \frac{\pi}{|f_n'(b_n)|^2} \left( \frac{1}{4}- \frac{c}{a^2} \right) \geq \frac{\pi}{8|f_n'(b_n)|^2} \geq \frac{\pi}{4^{2n+3} a^2}.
    \end{align}
    To estimate the size of $\Omega_{a}(g)$ we simply notice that for $z \in \partial B\left(a,\frac{1}{a^{n/4}}\right)$ we have 
    \begin{align*}
        |g_n(z)| \geq \frac{1}{a^{n/2}} \prod_{j=3}^n |z-z_{j,n}|\geq \frac{1}{a^{n/2}}|a-3|^{n-2} >1, 
    \end{align*}
    by our choice of $a$. Therefore, $\Omega_a(g) \subset B\left(a,\frac{1}{a^{n/4}}\right)$ and we obtain the following comparison 
    \begin{align}\label{upper bound on area for example}
          \frac{\pi}{a^{n/2}} = m\left( B\left(a,\frac{1}{a^{n/4}}\right)\right) \geq m\left(\Omega_a(g)\right) \geq m\left(\Omega_a(g)\setminus  \Omega_a(f) \right).
    \end{align}
    Now combining \eqref{lower bound for area} and \eqref{upper bound on area for example} and remembering that $a > 400$, we have
    \begin{align*}
        m\left(\Omega_{b_n}(f) \setminus \Omega_{b_n}(g)\right) \geq \frac{\pi}{4^{2n+3} a^2} > \frac{\pi}{a^{n/2}}> m\left(\Omega_a(g)\setminus  \Omega_a(f) \right).
    \end{align*}
    This proves \eqref{area inequality 2} and hence the Proposition.
\end{proof}

While the above proposition proves that Fekete polynomials are not area minimizers in general, it would be of interest to know if they are minimzers when the set $K$ is connected. We leave this as an open question.  

\section{Minimal lemniscate area when zeros are in [-2, 2]}
We would now like to shine light on an interesting aspect of the minimal area problem for sets of capacity $1$. Our two prototypical examples will be the closed disc $\overline{\D}$ and the interval $I = [-2, 2]$, both of which have capacity $1$. In the recent work \cite{KLR2}, it has been shown that $A_n(\overline{\D})\geq\frac{c}{\log n}$ for some universal constant $c>0$. On the other hand, we will prove below that $A_n(I)$ decays at a polynomial rate. This shows that while  $\vartheta(K) = 0$ may hold for all compacts $K$ with $\operatorname{cap}(K) = 1,$ the decay rate of $A_n(K)$ is not universal. It will be interesting to determine whether $A_n(K)$ can only decay like $\frac{1}{n}$ or $\frac{1}{\log n}$, or whether other rates are possible.

\begin{thm}\label{A_n(I)}
There exists a constant $c_1  > 0$ such that for all large $n$, we have
\[ A_n(I)\leq \frac{c_1}{n}.\]
\end{thm}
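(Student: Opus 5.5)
We take $p_n(z) = 2T_n(z/2)$, the monic Chebyshev polynomial of $I=[-2,2]$, and compute the area of $\Lambda_{p_n}$ explicitly. Since $A_n(I)\le m(\Lambda_{p_n})$, it suffices to show $m(\Lambda_{p_n})\le c_1/n$. All zeros $2\cos\big(\frac{(2k-1)\pi}{2n}\big)$, $k=1,\dots,n$, lie in $I$, so $p_n\in\mathscr{P}_n(I)$.

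\textbf{Change of variables.} We use the Joukowski map $z = J(w) = w + w^{-1}$. It maps the exterior $\{|w|>1\}$ conformally onto $\C\setminus I$, and under it $p_n(z) = w^n + w^{-n}$. Writing $w = e^{s+i\theta}$ with $s>0$ and $\theta\in[0,2\pi)$, we get $p_n = 2\cosh(n(s+i\theta))$. Hence
\[
|p_n(z)|^2 = 4\big(\sinh^2(ns) + \cos^2(n\theta)\big).
\]
So $|p_n|<1$ forces $\sinh(ns) < \tfrac12$, i.e. $s < s_n := \operatorname{arcsinh}(1/2)/n$.

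\textbf{Inclusion.} Consequently $\Lambda_{p_n}\setminus I$ is contained in the image under $J$ of the thin annulus $\{1<|w|<e^{s_n}\}$. The segment $I$ itself has zero area and can be ignored.

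\textbf{Area bound.} By the change of variables formula,
\[
m(\Lambda_{p_n}) \le \int_{1<|w|<e^{s_n}} |J'(w)|^2\, dm(w) = \int_0^{2\pi}\!\!\int_0^{s_n} |1 - w^{-2}|^2\, e^{2s}\, ds\, d\theta .
\]
On this annulus $|w|\le e^{s_n}\le e$, so the integrand is bounded by an absolute constant, say $|1-w^{-2}|^2e^{2s}\le 4e^2$. The integral is therefore at most $2\pi\cdot 4e^2\cdot s_n = O(1/n)$. This gives the theorem with an explicit $c_1$.

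\textbf{Main obstacle.} The only delicate point is the bookkeeping of the Joukowski map: it is two-to-one from $\C\setminus\{0\}$ onto $\C$. One must work only in $|w|>1$, where it is injective, so that the change of variables gives an upper bound and not an overcount. One must also check that the identity $p_n(J(w)) = w^n + w^{-n}$ holds for the monic normalization $2T_n(z/2)$; this is the standard identity $T_n(\cos\phi) = \cos(n\phi)$ continued analytically. No finer analysis near the zeros is needed, because the crude annulus bound already yields the rate $1/n$.
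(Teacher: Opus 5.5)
Your proof is correct, and it takes a different route from the paper. Both of you use the monic Chebyshev polynomial $\mathcal{T}_n$; the difference is in how its lemniscate area is bounded.

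\textbf{The paper's route} is component by component. Since all critical values of $\mathcal{T}_n$ have modulus $2$, $\Lambda_{\mathcal{T}_n}$ splits into $n$ components, each containing exactly one zero. Lemma \ref{mainlemma} (the distortion theorem applied to the inverse branch $r\D\to\Omega_r$ with $r=3/2$) places the component around $x_k$ inside $B\bigl(x_k, 9/|\mathcal{T}_n'(x_k)|\bigr)$. The identity $\sum_k |\mathcal{T}_n'(x_k)|^{-2}=\frac{1}{2n}$ then gives $m(\Lambda_{\mathcal{T}_n})\le \frac{81\pi}{2n}$.

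\textbf{Your route} uniformizes $\C\setminus I$ globally by the Joukowski map. The formula $|p_n|^2=4\bigl(\sinh^2(ns)+\cos^2(n\theta)\bigr)$ shows that the whole lemniscate lies in the filled confocal ellipse $J(\{|w|<e^{s_n}\})$. Your Jacobian integral can in fact be evaluated exactly: it equals that ellipse's area, $\pi(e^{2s_n}-e^{-2s_n})=2\pi\sinh(2s_n)=\bigl(4\pi\operatorname{arcsinh}(1/2)+o(1)\bigr)/n$, which is roughly $6/n$. This is much better than the paper's constant.

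\textbf{What your route buys.} It is more elementary: no univalent function theory, no critical-value or component analysis. Because the change of variables is exact, it also gives the matching lower bound in the paper's closing remark. Restrict to the set where $\sinh^2(ns)<\frac18$ and $\cos^2(n\theta)<\frac18$, and use $|1-w^{-2}|^2\ge 4e^{-2s}\sin^2\theta$ on a set of $\theta$ of proportional measure where $|\sin\theta|$ is bounded below.

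\textbf{What the paper's route buys.} It is local and transferable. It shows that the individual component at $x_k$ has size $\lesssim \sin\theta_k/n$; your ellipse only confines everything to a thin tube around $I$. Lemma \ref{mainlemma} applies to any polynomial whose lemniscate components are isolated, and the paper reuses it in the proof of Proposition \ref{nonminimizer}. Your argument instead depends on the explicit uniformization available only for the interval.
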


\begin{remark}\label{remark2}
A matching lower bound for $A_n(I)$ of the form $A_n(I)\geq\frac{c}{n}$ can be shown using methods from a forthcoming paper, but since this is not necessary to make our point about the non-universality of the decay rate of $A_n(K)$, we omit it here.     
\end{remark}

\begin{proof}[Proof of Theorem \ref{A_n(I)}]
Our proof is rather direct. Let $\mathcal{T}_n(x)$ be the monic Chebyshev polynomial of degree $n$ associated with the interval \([-2,2]\), that is
\[
\mathcal{T}_n(x):=2\,T_n(x/2),
\]
 where \(T_n\) denotes the classical Chebyshev polynomial of the first kind, defined by
\[
T_n(\cos \theta)=\cos(n\theta).
\]
 To prove Theorem \ref{A_n(I)} it suffices to show that
\begin{align}\label{bound for chebychev}
    m(\Lambda_{\mathcal{T}_n})\leq \frac{81\pi}{2n}.
\end{align} 
The main idea in proving \eqref{bound for chebychev} is to show that the components of the Chebyshev lemniscate can be covered by sufficiently small balls centered at the corresponding zeros; see Figure \ref{fig:full_width_image}. For this purpose, we need the following lemma.

\begin{Lemma}\label{mainlemma}
Let \(p\) be a monic polynomial, and let \(\Omega_1\) be a connected component of $\Lambda_p$ which contains exactly one zero $z_0$. Let \(r>1\), and let \(\Omega_r\) be the connected component of
\[
\{z\in \mathbb C: |p(z)|<r\},
\]
which contains \(\Omega_1\). Assume further that \(\Omega_r\) contains no zero of \(p\) other than \(z_0\). Then
\begin{align}\label{Ball}
    \Omega_1
\subset B\!\left(z_0,\frac{r^2}{(r-1)^2\,|p'(z_0)|}\right).
\end{align}  
\end{Lemma}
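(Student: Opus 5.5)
The natural approach is to invert $p$ near $z_0$ on $\Omega_r$ and apply Koebe-type distortion to the inverse branch. Since $\Omega_r$ is a component of $\{|p|<r\}$ containing exactly one zero $z_0$ of $p$, the map $p:\Omega_r\to B(0,r)$ is proper. Its degree equals the number of zeros in $\Omega_r$ counted with multiplicity, by the argument principle applied along the level curve $\{|p|=r\}\cap\partial\Omega_r$.

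We first need $z_0$ to be a simple zero. If $p'(z_0)=0$ the right side of \eqref{Ball} is $+\infty$ and there is nothing to prove, so assume $p'(z_0)\neq 0$. Then $z_0$ is simple, $p$ has degree one on $\Omega_r$, and $p$ is a conformal bijection $\Omega_r\to B(0,r)$; no critical points can occur, since the degree is one. Let $\phi:B(0,r)\to\Omega_r$ be the inverse. It satisfies $\phi(0)=z_0$ and $\phi'(0)=1/p'(z_0)$, and $\Omega_1=\phi(B(0,1))$, since $\Omega_1$ is the component of $\{|p|<1\}$ inside $\Omega_r$.

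Next, rescale: $h(\zeta)=p'(z_0)\bigl(\phi(r\zeta)-z_0\bigr)/r$ is univalent on $\D$ and normalized by $h(0)=0$, $h'(0)=1$. The growth theorem for the class $S$ gives
\[|h(\zeta)|\le \frac{|\zeta|}{(1-|\zeta|)^2}.\]
For $w\in B(0,1)$, put $\zeta=w/r$, so $|\zeta|<1/r$. Then
\[|\phi(w)-z_0|=\frac{r}{|p'(z_0)|}|h(\zeta)|\le \frac{r}{|p'(z_0)|}\cdot\frac{1/r}{(1-1/r)^2}=\frac{r^2}{(r-1)^2|p'(z_0)|}.\]
Hence $\Omega_1=\phi(B(0,1))$ is contained in the stated ball. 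Strictly, the bound is attained only as $|w|\to1$, and the open ball is obtained from the strict inequality $|\zeta|<1/r$ together with the monotonicity of $x/(1-x)^2$.

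The main point needing care is the first step: justifying that $p|_{\Omega_r}$ is proper with degree equal to the number of zeros. The plan is to note that $\Omega_r$ is a bounded component of the open set $\{|p|<r\}$, so $|p|\to r$ at $\partial\Omega_r$. Properness then follows, and a proper holomorphic map onto the disc has constant valence, equal to the number of preimages of $0$ counted with multiplicity. The hypotheses that $\Omega_1$ contains exactly one zero and $\Omega_r$ contains no other zero are used precisely here.
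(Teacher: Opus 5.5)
Your proposal is correct and follows essentially the same route as the paper: invert $p$ on $\Omega_r$, rescale the inverse branch to a normalized univalent function on $\D$, and apply the growth theorem on $|\zeta|<1/r$. Your properness/degree argument for why $p:\Omega_r\to r\D$ is a conformal bijection, and your handling of the degenerate case $p'(z_0)=0$, supply details that the paper only asserts.
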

\begin{figure}
    \centering
    \includegraphics[width=\textwidth]{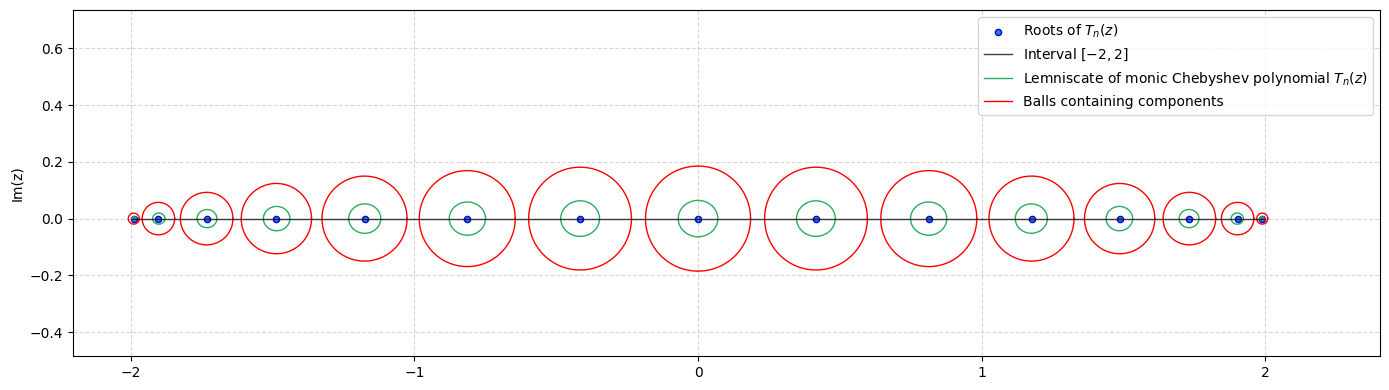}
     \caption{The lemniscate of the monic Chebyshev polynomial of degree \(15\) is shown in green. The disjoint red balls, which contain the connected components of the lemniscate, are of the form \eqref{Ball} with \(r=1.5\).}
    \label{fig:full_width_image}
\end{figure}
Assuming the Lemma for now, we will show how to conclude the proof of Theorem \ref{A_n(I)}.  We first note that all the critical values of $\mathcal{T}_n(x)$ are $2$ in modulus \cite[Example~$2.7$]{GhoshRamachandran}. This implies $\Lambda_{\mathcal{T}_n}$ has $n$ connected components (see \cite[Lemma~2.5]{GhoshRamachandran}). With this in hand, for each connected component of $\Lambda_{\mathcal{T}_n}$ we can use Lemma \ref{mainlemma} with $r=3/2$. Let \(x_1,\dots,x_n\) denote the zeros of \(\mathcal{T}_n\). Then, in view of Lemma \ref{mainlemma} the bounds in \eqref{bound for chebychev} will follow once we prove that
\begin{equation}\label{main thing to show in chebyshev}
    \sum_{k=1}^n \frac{1}{\lvert \mathcal{T}_n'(x_k)\rvert^2}=\frac{1}{2n}.
\end{equation}

The zeros of \(\mathcal{T}_n\) are
\[
x_k=2\cos\theta_k,
\qquad
\theta_k:=\frac{(2k-1)\pi}{2n},
\qquad
k=1,\dots,n.
\]
A simple computation shows that
\[
|\mathcal{T}_n'(x_k)|
=
n\,\frac{|\sin(n\theta_k)|}{|\sin\theta_k|}=n\,\frac{\Big|\sin\big(\frac{(2k-1)\pi}{2}\big)\Big|}{|\sin\theta_k|}=\frac{n}{\sin\theta_k}.
\]
Summing over \(k=1,\dots,n\), we obtain
\[
\sum_{k=1}^n \frac{1}{|\mathcal{T}_n'(x_k)|^2}
=
\frac{1}{n^2}\sum_{k=1}^n \sin^2\theta_k.
\]
Thus, it remains to evaluate the trigonometric sum above. Using $\sin^2\theta=\frac{1-\cos(2\theta)}{2}$, we get
\[
\sum_{k=1}^n \sin^2\theta_k
=
\frac12\sum_{k=1}^n \bigl(1-\cos(2\theta_k)\bigr)
=
\frac n2-\frac12\sum_{k=1}^n \cos(2\theta_k).
\]
Therefore, to finish the proof, it suffices to verify that
\begin{align}\label{cos identity}
    \sum_{k=1}^n \cos\!\left(\frac{(2k-1)\pi}{n}\right)=0,
\end{align}
This is immediate since the LHS of \eqref{cos identity} is the real part of 
\[
\sum_{k=1}^n e^{i(2k-1)\pi/n}
=
e^{i\pi/n}\sum_{k=0}^{n-1} e^{2\pi i k/n}=0.
\]
 This completes the proof of the Theorem modulo the Lemma which we now prove. 
\end{proof}

\begin{proof}[Proof of Lemma \ref{mainlemma}]
We start by noting that the hypotheses guarantee that $p$ maps $\Omega_r$ conformally onto $r\D$ with $p(\Omega_1) = \D$. Let $f:r\D\rightarrow \Omega_r$ be the inverse of $p$. For later use we note here that $f'(0) = \frac{1}{p'(z_0)}$. Now define
\[
H(\zeta):=\frac{f(r\zeta)-z_0}{r\,f'(0)},\qquad \zeta\in \mathbb D.
\]
Then \(H\) is injective holomorphic on \(\mathbb D\) with $H(0)=0$ and $H'(0)=1$.
So \(H\) is a normalized univalent function (also called Schlicht function in the classical literature) on \(\mathbb D\) and for each $z\in \Omega_1$ there exists \(w\in \mathbb D\) such that
\[
z=f(w)=f(r\zeta_w),\qquad \zeta_w:=\frac{w}{r}.
\]
From the definition of the function $H$ it follows that
\begin{align}\label{distortionineq}
|z-z_0|=|f(w)-z_0|=|f(r\zeta_w)-z_0|=r\,|f'(0)|\,|H(\zeta_w)|.
\end{align}
We now invoke the Distortion theorem for univalent functions \cite[Theorem $1.6$]{gameline} which says  that for every \(\zeta\in \mathbb D\), we have
\[
|H(\zeta)|\le \frac{|\zeta|}{(1-|\zeta|)^2}.
\]
Since \(|\zeta_w|<1/r\), feeding the estimate for $H(\zeta_w)$ into equation \eqref{distortionineq} we obtain
\[
|z-z_0|
\le
r\,|f'(0)|\cdot \frac{r}{(r-1)^2}
=
\frac{r^2}{(r-1)^2}\,\frac{1}{|p'(z_0)|}.
\]
Since \(z\in \Omega_1\) was arbitrary, this proves that
\[
\Omega_1\subset B\!\left(z_0,\frac{r^2}{(r-1)^2\,|p'(z_0)|}\right).
\]
\end{proof}
\begin{remark}
    While \eqref{bound for chebychev} provides only an upper bound for the area of 
    \(\Lambda_{\mathcal{T}_n}\), the estimate is sharp up to constants. 
    In fact, an application of Koebe's one-quarter theorem with a similar argument yields 
    the complementary lower bound
    \[
        m(\Lambda_{\mathcal{T}_n}) \geq \frac{\pi}{32n}.
    \]
\end{remark}

With the upper bound for $A_n(I)$ established, we can transplant this result to obtain bounds for $A_n(K)$ for a special class of compacts $K$ which we now define.

\begin{defn}[\textbf{Period $d$ set}, \cite{asymptoticofchebyshev,GhoshRamachandran}]
Let $K \subset \mathbb{C}$ be a compact set of logarithmic capacity $1$. Let $d\in\N$. We say that $K$ is a \emph{period-$d$ set} if there exists a monic polynomial $Q_d$ of degree $d$ such that
\begin{equation}\label{period m set}
    K = Q_d^{-1}([-2,2]).
\end{equation}
In this case, $Q_d$ is called a \emph{generating polynomial} for $K$.
\end{defn}

\begin{Lemma}\label{lem1}
    Let $K$ be a period-$d$ set. Then $\vartheta(K)=0$.
\end{Lemma}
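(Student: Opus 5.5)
The natural approach is to transplant the Chebyshev construction from Theorem \ref{A_n(I)} to $K$ by composition. For each $n$, consider $P_{nd}:=\mathcal{T}_n\circ Q_d$. This is a monic polynomial of degree $nd$ (since $\mathcal{T}_n$ and $Q_d$ are monic). Its zeros are the preimages under $Q_d$ of the zeros $x_k\in(-2,2)$ of $\mathcal{T}_n$, so they lie in $Q_d^{-1}([-2,2])=K$. Hence $P_{nd}\in\mathscr{P}_{nd}(K)$. Moreover,
\[
\Lambda_{P_{nd}} = Q_d^{-1}\big(\Lambda_{\mathcal{T}_n}\big),
\]
and by the proof of Theorem \ref{A_n(I)} we have $m(\Lambda_{\mathcal{T}_n})\le \frac{81\pi}{2n}$. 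More precisely, $\Lambda_{\mathcal{T}_n}\subset\bigcup_k B(x_k,\rho_k)$ with $\rho_k = 9/|\mathcal{T}_n'(x_k)| = 9\sin\theta_k/n\le 9/n$. Since $\vartheta(K)=\inf_N A_N(K)\le\inf_n A_{nd}(K)$, it suffices to show that $m(Q_d^{-1}(E_n))\to 0$, where $E_n:=\Lambda_{\mathcal{T}_n}$.

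The key step is to control the area of a preimage under a fixed polynomial $Q_d$ of a set of small area that stays in a bounded region. I would use the change of variables formula: for any measurable $E$,
\[
\int_{Q_d^{-1}(E)} |Q_d'|^2\,dm = \int_E \#\big(Q_d^{-1}(w)\big)\,dm(w)\le d\, m(E).
\]
This alone does not bound $m(Q_d^{-1}(E))$, because $|Q_d'|$ may vanish at critical points. So I would split $Q_d^{-1}(E_n)$ into two parts. The first is the part within distance $\epsilon$ of the finitely many critical points of $Q_d$; it has area at most $\pi(d-1)\epsilon^2$. On the second part, $|Q_d'|\ge c(\epsilon)>0$: indeed $Q_d^{-1}(E_n)$ lies in the fixed compact set $Q_d^{-1}(\overline{B(0,3)})$ (as $E_n\subset B(0,2+9/n)$), and on this compact set minus the $\epsilon$-neighbourhoods $|Q_d'|$ is bounded below. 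Combining the two parts gives
\[
m\big(Q_d^{-1}(E_n)\big)\le \pi(d-1)\epsilon^2 + \frac{d}{c(\epsilon)^2}\cdot\frac{81\pi}{2n}.
\]
Letting $n\to\infty$ and then $\epsilon\to 0$ yields $\inf_n A_{nd}(K)=0$, so $\vartheta(K)=0$.

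The main obstacle is precisely this degeneration near critical points of $Q_d$. The $\epsilon$-splitting handles it cleanly for the qualitative statement $\vartheta(K)=0$, at the cost of any explicit rate. If a rate were desired, I would instead use a local estimate near a critical point of order $k$: the preimage of a disc of radius $\rho$ has diameter roughly $\rho^{1/(k+1)}$. This would give a bound like $n^{-2/d}$ up to constants, but it is not needed here.
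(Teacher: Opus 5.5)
Your argument is correct. It uses the same construction as the paper: $P_{nd}=\mathcal{T}_n\circ Q_d$, zeros in $K$, $\Lambda_{P_{nd}}=Q_d^{-1}(\Lambda_{\mathcal{T}_n})$, and $m(\Lambda_{\mathcal{T}_n})\le 81\pi/(2n)$. What differs is how you bound the area of the polynomial preimage.

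The paper quotes Crane's theorem, $m(Q_d^{-1}(E))\le \pi\,(m(E)/\pi)^{1/d}$ for every measurable $E$. In one line this gives the explicit rate $A_{nd}(K)\le Cn^{-1/d}$, with no boundedness considerations.

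You instead combine the area formula $\int_{Q_d^{-1}(E)}|Q_d'|^2\,dm\le d\,m(E)$ with an excision of $\epsilon$-discs around the critical points. This is elementary and self-contained. As written it is purely qualitative, because your $c(\epsilon)$ comes from a compactness argument.

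You can in fact recover the paper's rate by your method. Writing $Q_d'(z)=d\prod_{j=1}^{d-1}(z-c_j)$ gives $|Q_d'|\ge d\,\epsilon^{d-1}$ off the $\epsilon$-discs, hence
\[
m\big(Q_d^{-1}(E)\big)\le \pi(d-1)\epsilon^2+\frac{m(E)}{d\,\epsilon^{2d-2}}.
\]
Choosing $\epsilon=m(E)^{1/(2d)}$ yields $m(Q_d^{-1}(E))\le\big(\pi(d-1)+\frac{1}{d}\big)\,m(E)^{1/d}$ for every measurable $E$. This is Crane's inequality up to the constant, and it does not need $E_n$ to be bounded.

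One caution about your closing aside, which is not part of the proof. A rate $n^{-2/d}$ cannot hold for $d=1$: then $K$ is a translate of $[-2,2]$, and the area is of exact order $1/n$ by the paper's bounds $\pi/(32n)\le m(\Lambda_{\mathcal{T}_n})\le 81\pi/(2n)$. For $d=2$, when the critical value of $Q_d$ lies in $(-2,2)$, the pulled-back area picks up an extra factor $\log n$. So the heuristic should read roughly $n^{-\min\{1,2/d\}}$, possibly with a logarithm when $d=2$.
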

The proof of this Lemma relies on the following result due to Crane regarding the area of polynomial preimages.

\begin{thm}\label{area theorem for lemniscate} \cite[Theorem $2$]{Crane}
    Let $p$ be a monic polynomial of degree $n$ over $\mathbb{C}$. Let $K$ be any measurable subset of the plane. Then 
\begin{equation*}
    \text{Area}(p^{-1}(K)) \leq  \pi \left(\frac{\text{Area}(K)}{\pi}\right)^{1/n},
\end{equation*}
with equality if and only if $K$ is (up to sets of measure zero) a disc and $p$ has a unique critical value at the centre of that disc.
\end{thm}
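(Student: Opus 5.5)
The plan is to reduce the statement to Pólya's classical lemniscate inequality, applied to the derivative $p'$, combined with the change-of-variables formula and a rearrangement (bathtub) argument. Throughout, set $E := p^{-1}(K)$ and let $E^*$ denote the disc centred at $0$ with $m(E^*) = m(E)$. The case $n = 1$ is trivial, since $p$ is then a translation, so assume $n \ge 2$.

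\textbf{Step 1 (area formula).} Every $w \in \C$ has exactly $n$ preimages under $p$, counted with multiplicity. The critical points form a finite set, so they do not affect Lebesgue integrals. Hence the change of variables formula gives
\[
\int_E |p'(z)|^2\, dA(z) = n\, m(K).
\]
It therefore suffices to prove the lower bound
\[
\int_E |p'|^2\, dA \;\ge\; n\pi \left(\frac{m(E)}{\pi}\right)^n .
\]
This is an equality when $p(z) = z^n$ and $E$ is a disc about $0$. Indeed, with $R$ the radius of $E^*$,
\[
\int_{E^*} n^2 |z|^{2n-2}\, dA = n\pi R^{2n} = n\pi \left(\frac{m(E)}{\pi}\right)^n .
\]

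\textbf{Step 2 (distribution comparison via Pólya).} Write $q := p'/n$, which is a monic polynomial of degree $n-1 \ge 1$. Pólya's inequality states that a monic polynomial $Q$ of degree $d$ satisfies $m(\{|Q| < \rho\}) \le \pi \rho^{2/d}$ for every $\rho > 0$. Applying it to $Q = q$ gives, for every $s > 0$,
\[
m\bigl(\{z : |p'(z)|^2 < s\}\bigr) = m\bigl(\{|q| < \sqrt{s}/n\}\bigr) \le \pi \bigl(\sqrt{s}/n\bigr)^{2/(n-1)} = m\bigl(\{z : n^2 |z|^{2n-2} < s\}\bigr).
\]
Let $f^*$ be the symmetric increasing rearrangement of $f := |p'|^2$. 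The inequality above says exactly that the sublevel sets of $f$ are no larger than those of $g(z) := n^2|z|^{2n-2}$. It follows that $f^* \ge g$ pointwise.

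\textbf{Step 3 (bathtub principle).} Among all sets of a given measure, the integral of a nonnegative function is smallest on a sublevel set. Consequently
\[
\int_E f \;\ge\; \int_{E^*} f^* \;\ge\; \int_{E^*} g \;=\; n\pi \left(\frac{m(E)}{\pi}\right)^n .
\]
Combining this with Step 1 gives $n\, m(K) \ge n\pi (m(E)/\pi)^n$, which rearranges to the claimed bound $m(E) \le \pi (m(K)/\pi)^{1/n}$.

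\textbf{Equality case.} Equality forces equality in each step, and I would trace this as follows.
\begin{enumerate}[(i)]
\item Equality in the bathtub step forces $E$ to coincide, up to null sets, with a sublevel set of $|p'|$.
\item Equality in Pólya's inequality forces $q$ to be of the form $(z - c)^{n-1}$, up to the relevant sublevel.
\item Together these give $p = (z - c)^n + \text{const}$, so $p$ has a single critical value, and then $K = p(E)$ is a disc centred at that critical value.
\end{enumerate}

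\textbf{Main obstacle.} The main difficulty is Pólya's inequality itself, including its equality case. I would quote it as classical. If a self-contained proof were needed, I would prove it by Steiner symmetrization with respect to lines through the origin, using that symmetrization does not increase the transfinite diameter. A bounded planar set of capacity $c$ has area at most $\pi c^2$. Applying this to the lemniscate $\{|Q| < \rho\}$, whose closure has capacity $\rho^{1/d}$, yields the inequality.
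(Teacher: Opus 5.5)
The paper gives no proof of this statement. It is quoted from Crane \cite[Theorem~2]{Crane} and used only as a black box in the proof of Lemma~\ref{lem1}, so there is no in-text argument to compare yours with. Your proof of the inequality is correct and self-contained, apart from classical facts you quote. The argument runs in three steps:
\begin{enumerate}[(a)]
\item Since $E=p^{-1}(K)$ contains all $n$ preimages of every point of $K$, the area formula gives $\int_E|p'|^2\,dA=n\,m(K)$.
\item P\'olya's inequality for the monic polynomial $p'/n$ of degree $n-1$ shows each sublevel set of $|p'|^2$ is no larger than the corresponding one of $n^2|z|^{2n-2}$.
\item The rearrangement (bathtub) inequality turns this into $n\,m(K)\ge n\pi\,(m(E)/\pi)^n$.
\end{enumerate}
What this route buys is a transparent explanation. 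Crane's bound contains P\'olya's inequality as the case $K=B(0,\rho)$, and you obtain it from P\'olya's inequality one degree lower, applied to $p'$. The cost is that P\'olya's inequality, with its equality case, must still be quoted.

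Your equality sketch is also right, but it can be stated more sharply.
\begin{itemize}
\item For $0<m(K)<\infty$, equality forces $f^*=g$ a.e.\ on $E^*=B(0,R)$. This gives equality in P\'olya's inequality for $p'/n$ at every level in $(0,R^{n-1})$.
\item Equality at even one positive level already forces $p'/n=(z-c)^{n-1}$ on all of $\C$. So the hedge ``up to the relevant sublevel'' is unnecessary.
\item Step (i) needs the level sets of $|p'|$ to be Lebesgue-null. They are, being real algebraic curves.
\item Passing from $E=B(c,R)$ a.e.\ to $K=B(p(c),R^n)$ a.e.\ uses $m(A)=\frac{1}{n}\int_{p^{-1}(A)}|p'|^2\,dA$, so preimages of non-null sets are non-null.
\end{itemize}
Finally, the equality characterization is only literally valid for $n\ge 2$ and $0<m(K)<\infty$; for $n=1$ equality always holds. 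That is a defect of the statement as quoted, not of your argument.
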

\begin{proof}[Proof of Lemma \eqref{lem1}]
Let \(Q_d\) be a generating polynomial for \(K\), and let \(\mathcal{T}_n\) denote the monic Chebyshev polynomial of \([-2,2]\) introduced above. Consider the sequence of monic polynomials
\begin{equation}\label{composition of chebyshev and generating polynoamil}
    P_{nd}:=(\mathcal{T}_n\circ Q_d)(z)=\mathcal{T}_n(Q_d(z)).
\end{equation}
Let \(x_1,\dots,x_n\) be the zeros of \(\mathcal{T}_n\). Then the zeros of \(P_{nd}\), counted with multiplicity, are precisely the points in
\[
\bigcup_{l=1}^n Q_d^{-1}(x_l).
\]
Since each \(x_l\in[-2,2]\) and $K=Q_d^{-1}([-2,2]),$ it follows that all the zeros of \(P_{nd}\) lie in \(K\). Next, by Theorem \ref{area theorem for lemniscate} ,
\begin{align}\label{area inequality}
\operatorname{Area}(\Lambda_{P_{nd}})
=
\operatorname{Area}\!\left(Q_d^{-1}\bigl(\mathcal{T}_n^{-1}(\mathbb D)\bigr)\right) 
\le
\pi \left(\frac{\operatorname{Area}(\mathcal{T}_n^{-1}(\mathbb D))}{\pi}\right)^{1/d}.
\end{align}
Applying the bound \eqref{bound for chebychev} from Theorem \ref{A_n(I)} in \eqref{area inequality}, we obtain that for all $n\in\mathbb N$
\[
A_{nd}(K)\le C\,n^{-1/d}.
\]
Passing to the limit as \(n\to\infty\), we conclude that $\vartheta(K)=0.$ 
\end{proof}

\subsection*{Acknowledgements} The authors are grateful to the anonymous referee whose feedback led us to explore the contents of Theorem~\ref{asymptotic min}. S.G. gratefully acknowledges support from the Israel Science Foundation, grant No. 3541/24.

\bibliographystyle{siam}
\bibliography{ref}

\end{document}